\documentclass[11pt,reqno]{amsart}
\usepackage[T1]{fontenc}
\usepackage{lmodern}
\usepackage[margin=1in]{geometry}
\usepackage{amsmath,amssymb,amsthm,mathtools}
\usepackage[expansion=false]{microtype}
\usepackage{enumitem}
\usepackage[colorlinks=true,linkcolor=blue,citecolor=blue,urlcolor=blue]{hyperref}
\hypersetup{pdftitle={Sharp spectral norm concentration of sparse random tensors},
pdfauthor={Zhixin Zhou and Yizhe Zhu}}
\numberwithin{equation}{section}
\newtheorem{theorem}{Theorem}[section]
\newtheorem{lemma}[theorem]{Lemma}
\newtheorem{proposition}[theorem]{Proposition}
\newtheorem{corollary}[theorem]{Corollary}
\theoremstyle{definition}
\newtheorem{definition}[theorem]{Definition}
\theoremstyle{remark}

\newcommand{\E}{\mathbb E}
\newcommand{\Pp}{\mathbb P}
\newcommand{\R}{\mathbb R}
\newcommand{\one}{\mathbf 1}
\newcommand{\eps}{\varepsilon}
\title{Sharp spectral norm concentration of sparse random tensors}
\author{Zhixin Zhou}
\address{Alpha Benito Research}
\email{zhixin0825@gmail.com}

\author{Yizhe Zhu}
\address{Department of Mathematics, University of Southern California}
\email{yizhezhu@usc.edu}

\date{September 16, 2026}
\keywords{sparse random tensor, spectral norm, concentration inequality,
multilevel discrepancy, Kahn--Szemer\'edi argument}
\begin{document}
\begin{abstract}
We prove a sharp concentration inequality for the spectral
norm of sparse random tensors with independent Bernoulli entries.
Let $T$ be an order-$k$ tensor of dimension $n\times\cdots\times n$
with independent Bernoulli$(p)$ entries, where $k$ is fixed.
For any $c,r>0$, we show that
$\|T-\E T\|\le C_{k,r,c}\sqrt{np}$ with probability at least $1-n^{-r}$
whenever $np\ge c\log n$. We extend this bound
to inhomogeneous Bernoulli sampling with deterministic entrywise weights.
This removes the logarithmic
factor in the work of Zhou and Zhu \cite{ZZ21}. The proof
follows the Kahn--Szemer\'edi light--heavy
decomposition with a refined estimate on the heavy tuple part. We also obtain a log-free second eigenvalue bound for the random hypergraph model of Friedman and Wigderson \cite{FW95}.
\end{abstract}
\maketitle

\section{Introduction}\label{sec:intro}

Spectral norm concentration is a fundamental problem in random matrix
theory and a basic tool in high-dimensional statistics. It controls
random fluctuations around a population matrix and underlies the
analysis of spectral methods for random graphs and networks~\cite{FO05,LR15}.
For interactions involving more than two indices, tensors provide a
natural extension of matrices. Their spectral norm, also called the
injective norm, measures the largest absolute value of the associated
multilinear form over unit vectors. In random hypergraphs, bounds for
centered adjacency tensors control deviations of edge counts between
vertex subsets and quantify pseudorandomness~\cite{FW95,ZZ21}.
Tensor norms also arise in the study of extremal energies of spherical
spin glasses: symmetric tensors correspond to optimization on a
single sphere, while nonsymmetric tensors lead to multipartite models
on a product of spheres~\cite{ABC13,DM24}.

In statistics, tensor norm bounds control the effect of noise on
low-rank structure. In spiked tensor models, the norm of the noise
enters error bounds for maximum-likelihood estimation~\cite{MR14}.
Operator-norm perturbation estimates also underlie tensor
decomposition methods for learning latent-variable models from
empirical moments~\cite{AGHKT14}. Related norm and discrepancy estimates
are used in tensor completion~\cite{HZ21,JO14,YZ17} and community detection
in multilayer networks~\cite{LCL20}. These applications involve
different distributional and structural assumptions, but share the
need for bounds that quantify sampling error and random fluctuations.
For sparse tensors, such bounds must account for both the small
entrywise variances and the  degree fluctuations in the underlying random hypergraph.

In this paper, we study an order-$k$ tensor $T$ of dimension
$n\times\cdots\times n$ with independent Bernoulli$(p)$ entries,
where $k$ is fixed. Put $d=np$, the expected number of nonzero entries
in a one-dimensional fiber. When $p\le1/2$ and $d\ge c\log n$, the
Euclidean norm of a centered fiber is of order $\sqrt d$ with high
probability. Since the tensor norm dominates every fiber norm, this
gives a natural lower bound for $\|T-\E T\|$. We prove the matching
upper bound: for every fixed $c,r>0$,
\[
 d=np\ge c\log n
 \quad\Longrightarrow\quad
 \Pp\{\|T-\E T\|>C_{k,r,c}\sqrt d\}\le n^{-r}.
\]
For $k\ge3$, this removes the factor $(\log n)^{k-2}$ from the
Bernoulli tensor concentration bound of
Zhou and Zhu~\cite{ZZ21}.
Corollary~\ref{cor:weighted} extends the estimate to deterministic
entrywise weights and unequal Bernoulli probabilities. The $\sqrt{d}$ scale and the sparsity condition are optimal. See the discussion below Theorem~\ref{thm:main}.

The main difficulty is to retain the  scale $\sqrt{np}$ when $p$ is small.
For any fixed tuple of unit vectors, the centered multilinear form
has variance at most $p$, but individual coefficients can be large
when the vectors concentrate on a few coordinates. Applying
Bernstein's inequality directly over a net therefore does not give
the desired bound in the sparse regime. We use the
Kahn--Szemer\'edi light--heavy decomposition~\cite{FKS89}, following
our previous work \cite[Section~4]{ZZ21}. Index tuples with small coordinate products
form the \textit{light part}, which is controlled by Bernstein's inequality
and a net argument. The remaining heavy tuples require more detailed
control of the distribution of nonzero entries.

Our main improvement is in the \textit{heavy-tuple} estimate.
As in our previous work~\cite[Section~4.3]{ZZ21}, we group vector
coordinates of comparable absolute value. Choosing one group from
each vector defines a block of tensor entries, within which the
coefficients in the multilinear form have comparable sizes.
The discrepancy estimate in~\cite[Lemma~4.4]{ZZ21} controls how
much the number of nonzero entries in each block can exceed its
mean. Although this estimate holds simultaneously for all blocks,
it is applied to each block separately. In the difficult cases,
the subsequent summation counts the possible values of $k-2$
remaining coordinate-magnitude levels, each with $O(\log n)$ choices, producing
the factor $(\log n)^{k-2}$. 

Here we strengthen the discrepancy
estimate to control collections of blocks jointly. To obtain a
bound valid for all unit vectors, we must account for all possible
choices of coordinate groups, but a group shared by several blocks
needs to be counted only once. 
For a matrix, for example, several
rectangular blocks may use the same set of rows; choosing that row
set once suffices to specify its use in all these blocks.
Our joint estimate accounts for this sharing. Together with the
unit-length constraints and bounds on the number of nonzero entries
along each fiber (a line with all but one index fixed), it allows us
to sum the block contributions without a factor from the number of
coordinate-magnitude levels. Thus the gain is in the
discrepancy estimate and its summation, rather than in the coordinate
decomposition: the heavy-tuple bound improves from
$O(\sqrt d\,(\log n)^{k-2})$ to $O(\sqrt d)$ for $k\ge3$.

In the sparse random matrix setting $k=2$, the joint discrepancy
estimate also streamlines the heavy-pair summation in the classical
Kahn--Szemer\'edi method~\cite{FKS89,FO05,LR15}.
The multibox estimate controls the difficult rectangular blocks
jointly, avoiding repeated charges for shared row and column groups.
The deterministic summation in Proposition~\ref{prop:heavy} uses three classes,
controlled respectively by degree bounds, the unit-length constraints,
and joint discrepancy. This avoids the further block-by-block
case distinctions in the heavy-pair analysis of~\cite{LR15} and gives a common
summation argument for matrices and higher-order tensors.

The heavy-tuple argument is deterministic once the degree and
discrepancy bounds hold. We also apply it to the model of Friedman
and Wigderson~\cite{FW95}, in which a fixed number of edges are
sampled independently with replacement. Although the resulting
occupancy counts are dependent, a joint exponential-moment estimate
provides the required discrepancy bound. Theorem~\ref{thm:fw}
then removes the logarithmic factor from their second eigenvalue
bound.

\subsection{Notation and definitions}\label{sec:notation}

We use the tensor notation of~\cite{ZZ21}. For a positive integer $n$,
write $[n]=\{1,\ldots,n\}$.
An order-$k$ tensor of dimension $n\times\cdots\times n$ is denoted by
$T=(t_{\mathbf i})_{\mathbf i\in[n]^k}$, where
$\mathbf i=(i_1,\ldots,i_k)$ is a multi-index.
A sum over $\mathbf i$ runs over $[n]^k$ unless stated otherwise.
For a vector $x_j\in\R^n$, its $i$th coordinate is $x_{j,i}$.
We write $\one=(1,\ldots,1)\in\R^n$.

\begin{definition}[Frobenius inner product and spectral norm]
For tensors $T,A\in\R^{n^k}$, define
\[
 \langle T,A\rangle
 =\sum_{\mathbf i}t_{\mathbf i}a_{\mathbf i},
 \qquad
 \|T\|_F=\langle T,T\rangle^{1/2}.
\]
For $x_1,\ldots,x_k\in\R^n$, their outer product has entries
\[
 (x_1\otimes\cdots\otimes x_k)_{i_1,\ldots,i_k}
 =\prod_{j=1}^k x_{j,i_j}.
\]
The associated multilinear form and the spectral norm are
\begin{align}
 T(x_1,\ldots,x_k)&=\langle T,x_1\otimes\cdots\otimes x_k\rangle,
 \label{eq:form}\\
 \|T\|&=\sup_{\|x_1\|_2=\cdots=\|x_k\|_2=1}
                 |T(x_1,\ldots,x_k)|.
 \label{eq:norm}
\end{align}
\end{definition}

The norm in \eqref{eq:norm} is also called the \textit{injective norm}.
For $k=2$ it is the usual matrix operator norm. For higher orders,
it is not the matrix operator norm obtained by unfolding the tensor.
Since
\[
 \|x_1\otimes\cdots\otimes x_k\|_F^2
 =\prod_{j=1}^k\|x_j\|_2^2,
\]
the Cauchy--Schwarz inequality gives $\|T\|\le\|T\|_F$.
More generally, multilinearity implies
\begin{equation}\label{eq:homogeneity}
 |T(x_1,\ldots,x_k)|\le\|T\|\prod_{j=1}^k\|x_j\|_2.
\end{equation}

Denote
$\log_+u=\max\{0,\log u\}$. The letter $C$ denotes a positive
constant that may change from line to line; subscripts indicate its
allowed dependence. The order $k$ is fixed throughout.

\subsection{Related work}\label{sec:related}

The light--heavy method originates in the Kahn--Szemer\'edi approach
to random regular graphs~\cite{FKS89}. Feige and Ofek~\cite{FO05} applied spectral
and discrepancy methods to sparse random graphs, including the
logarithmic-degree regime and the effect of removing high-degree
vertices.
Subsequent developments include spectral estimates for a wide range of random graph
and hypergraph models~\cite{BFSU98,LR15,DJPP13,CGJ18,TY19,Z23,DWZ25}.
Le, Levina, and Vershynin~\cite{LLV17} obtained optimal-order matrix
concentration after degree regularization by combining
Grothendieck--Pietsch factorization with a decomposition of the random
graph.

For random tensors, Nguyen, Drineas, and Tran~\cite{NDT15} studied spectral
norms through entropy and concentration estimates for sparse and spread
vectors, with applications to tensor sparsification.
Tomioka and Suzuki~\cite{TS14} gave a covering-number argument under
sub-Gaussian assumptions on fixed multilinear evaluations.
For Bernoulli$(p)$ entries, the sub-Gaussian bound is not uniform at
the variance scale $p(1-p)$ as $p$ tends to zero.
For real and complex tensors with independent standard Gaussian
entries, Dartois and McKenna~\cite{DM24} obtained
high-probability upper bounds with refined leading constants by
counting critical points of the associated Gaussian multilinear form.
A complementary moment method bounds expected injective norms in
Gaussian and certain non-Gaussian models~\cite{DM26}.

Concentration for sampled tensors also arises in tensor completion
and network estimation. Jain and Oh~\cite{JO14}
proved a spectral-norm bound for randomly sampled symmetric
third-order tensors in their analysis of tensor completion.
Lei, Chen, and Lynch~\cite{LCL20} obtained a spectral-norm
concentration bound for the centered third-order adjacency tensor
in a multilayer network model.
For tensor sparsification, Xia and Yuan~\cite{XY21}
established a spectral-norm concentration inequality for tensors
obtained by entrywise Bernoulli sampling and reweighting.

Guan, Jiang, and Li~\cite{GJL25} observed that a log-free weighted Bernoulli
spectral-norm estimate would remove the remaining order-dependent
logarithmic factor from their tensor robust-PCA recovery conditions.
Our Corollary~\ref{cor:weighted}  therefore
proves Conjecture~7.1 in \cite{GJL25}.

Concentration also depends on the choice of tensor norm.
Yuan and Zhang~\cite{YZ17} proved concentration
inequalities for sampled tensors under an \emph{incoherent spectral
norm}. This norm restricts the sizes of the coordinates of the test
vectors in all but two modes, which allows them to show concentration below $np\asymp 1$.
A sharper sparse tensor concentration bound under this incoherent
norm, with an application to tensor completion, was obtained in~\cite{LMSZ26}.
These results restrict the test vectors, whereas the spectral norm
in this paper is taken over all Euclidean unit vectors.

For independent entries, Boedihardjo~\cite{Boe24} obtained
expected injective-norm bounds in terms of the entry variances,
first for Gaussian tensors and then for bounded entries, together
with concentration about the expectation.
In our homogeneous Bernoulli model, his Corollary~1.5 gives
\[
 \E\|T-\E T\|
 \le C_k\bigl(\sqrt{np(1-p)}+(\log n)^2\bigr).
\]
The additive logarithmic term does not yield the $\sqrt{np}$ scale
in the regime $np\asymp \log n$.

Lucca and Pesenti~\cite[Theorem~1.3]{LP26} determined the
spectral gap of sparse Erd\H{o}s--R\'enyi hypergraphs.
For fixed $k\ge3$, let $T$ be the symmetric $0$--$1$ adjacency
tensor of a $k$-uniform hypergraph on $n$ vertices, with each
$k$-element subset included independently with probability $p$.
If $pn^{k/2}\to\infty$ as $n\to\infty$, then with high probability,
\[
 \|T\|-\|T-\E T\|=(1-o(1))pn^{k/2}.
\]
This gives a spectral-gap asymptotic in a regime extending below
$p=1/n$, whereas our main result gives a sharp $O(\sqrt{np})$
centered-norm bound for independent-entry tensors when $np\ge c\log n$.

Another line of work concerns tensors with dependent entries.
For tensor products of independent random vectors, available results include
concentration for convex Lipschitz functions under bounded-coordinate
assumptions~\cite{V20}, as well as Hanson--Wright-type inequalities
and concentration of Euclidean norms of linear images when the underlying
coordinates are independent and sub-Gaussian~\cite{BKW22,V20}.
Nonasymptotic injective-norm bounds have also been established for Gaussian
and Rademacher tensor series with correlated entries~\cite{BGJLR24}.
For centered empirical moment tensors and their asymmetric counterparts,
sharp injective-norm bounds under sub-Gaussian assumptions are obtained
through empirical-process methods~\cite{ACS25,CS26}.
In these empirical models, the observations are independent, but the
tensor entries generally share random factors and are therefore dependent.
These results complement the independent-entry concentration problem
considered here.

Friedman and Wigderson~\cite{FW95}
studied the second eigenvalue of a random hypergraph generated by a
fixed number of independent uniform ordered edge draws, with replacement.
This differs from the independent-entry Bernoulli model. Our proof gives an improved bound on the second eigenvalue in their random hypergraph model.

\subsection*{Organization of the paper}
The paper is organized as follows. Section~\ref{sec:main} states
the Bernoulli concentration theorem and its weighted extension.
Section~\ref{sec:fw} gives the application to the
Friedman--Wigderson model. The proofs are given in
Sections~\ref{sec:proof} and~\ref{sec:fw-proof}, respectively.
Appendix~\ref{app:dyadic} contains the dyadic entropy estimate used
in the heavy-tuple argument.

\section{Main results}\label{sec:main}

Let $T$ have independent entries
\begin{equation}\label{eq:model}
 t_{\mathbf i}\sim\operatorname{Bernoulli}(p),
 \qquad \mathbf i\in[n]^k.
\end{equation}
Thus the mean tensor and the centered tensor are
\[
 P=\E T=p\,\one^{\otimes k},
 \qquad W=(w_{\mathbf i})=T-P.
\]
Independence in \eqref{eq:model} is over all ordered index tuples.
Set $d=np$; this is the expected number of nonzero entries in any
one-dimensional fiber.

\begin{theorem}\label{thm:main}
Let $k\ge2$ be a fixed integer. For any $c>0$ and $r>0$, there exists a
constant $C_{k,r,c}$ depending only on $k,r,c$ such that, for all
$n\ge2$ and $p\in(0,1]$, if $ d=np\ge c\log n$,  then the tensor in \eqref{eq:model} satisfies
\begin{equation}\label{eq:main}
 \Pp\{\|T-\E T\|>C_{k,r,c}\sqrt d\}\le n^{-r}.
\end{equation}
\end{theorem}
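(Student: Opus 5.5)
We follow the Kahn--Szemer\'edi light--heavy scheme. First we reduce to a net. Let $\mathcal N$ be a $1/2$-net of the unit sphere in $\R^n$, restricted to vectors whose coordinates lie in $(\eps/\sqrt n)\mathbb Z$, so that $|\mathcal N|\le e^{C n}$. By multilinearity and \eqref{eq:homogeneity}, a standard argument gives
\[
\|W\|\le C_k\sup_{x_1,\dots,x_k\in\mathcal N}|W(x_1,\ldots,x_k)|.
\]
For fixed $(x_1,\dots,x_k)$ we split the index tuples. The \emph{light} tuples are
\[
L=\Bigl\{\mathbf i:\ \prod_j|x_{j,i_j}|\le \sqrt d/n^{k/2}\cdot n^{(k-2)/2}\cdot n^{-(k-2)/2}\Bigr\},
\]
that is, those with $\prod_j|x_{j,i_j}|\le\sqrt d/n$ after normalizing appropriately; equivalently we use the threshold $\sqrt{p/n}$ adapted to order $k$. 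The \emph{heavy} tuples $H$ form the complement.

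\textbf{Light part.} The sum $\sum_{\mathbf i\in L}w_{\mathbf i}\prod_j x_{j,i_j}$ has variance at most $p$, and each summand is bounded by $\sqrt{p/n}$. Bernstein's inequality therefore gives the tail bound $\exp(-C n\cdot t^2/(1+t))$ at level $t\sqrt d$. This beats the net cardinality $e^{Ckn}$ for large $t$. The mean correction $\sum_{\mathbf i\in H}p\prod_j x_{j,i_j}$ is bounded deterministically by $\sqrt d$, using $\prod_j|x_{j,i_j}|\ge\sqrt{p/n}$ and Cauchy--Schwarz.

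\textbf{Heavy part.} It remains to bound $\sum_{\mathbf i\in H}t_{\mathbf i}\prod_j|x_{j,i_j}|$ deterministically on a high-probability event $\mathcal E$. The event $\mathcal E$ consists of two parts:
\begin{enumerate}[label=(\roman*)]
\item every fiber has at most $C_{r,c}d$ ones. This follows from Chernoff and a union bound, using $d\ge c\log n$.
\item a \emph{joint} discrepancy property. For any collection of boxes $S^{(b)}_1\times\dots\times S^{(b)}_k$ built from a family of sets, let $e(\cdot)$ denote the number of ones in a box and $\mu=p\prod|S_j|$ its mean. Then either $e\le C\mu$, or $e\log(e/\mu)\le C\sum(\text{distinct }|S|\log(n/|S|))$, summed over the distinct sets used. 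The point is that each set is counted once even when it is shared by several boxes.
\end{enumerate}
We prove (ii) with a Chernoff bound on $\sum_b e(\text{box}_b)$, taken jointly with a union bound over the choices of the distinct sets. The entropy cost is $\prod\binom{n}{|S|}$ over distinct sets only.

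Next we partition the coordinates of each $x_j$ into dyadic levels $|x_{j,i}|\approx 2^{-s}$, giving sets $S_{j,s}$. The heavy sum is then bounded by
\[
\sum_{\mathbf s}2^{-\sum s_j}\,e(S_{1,s_1}\times\cdots\times S_{k,s_k}),
\]
restricted to heavy level tuples. We split the level tuples into three classes:
\begin{enumerate}[label=(\alph*)]
\item tuples where $e\le C\mu$. These sum to $O(\sqrt d)$ using the heaviness threshold.
\item tuples where one set is so small that the fiber bound (i) dominates. In this class we bound $e$ by $Cd$ times the product of the $k-1$ smallest set sizes, and then sum with the unit-norm constraints $\sum_s|S_{j,s}|4^{-s}\le 1$.
\item the remaining tuples. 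These are handled by the joint estimate (ii).
\end{enumerate}

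\textbf{Main obstacle.} The main obstacle is class (c). Applying (ii) box by box costs a factor $\log n$ for each free level, which is the source of the $(\log n)^{k-2}$ in \cite{ZZ21}. Instead we apply (ii) to the whole class at once, grouped by the pair of modes achieving the largest entropy. The right side is then $\sum_{j,s}|S_{j,s}|\log(n/|S_{j,s}|)$, which by a dyadic entropy estimate (Appendix~\ref{app:dyadic}) is $O(n\cdot 2^{-2s})$-weighted summable to $O(1)$ in units matching $\sqrt d$ times the unit-norm budget. Converting the $e\log(e/\mu)$ bound into a weighted sum bound needs care: we would attempt a Hölder/convexity step distributing the entropy budget over the boxes in proportion to $2^{-\sum s_j}$. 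Completing this accounting is where the real work lies.

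Finally, $\Pp(\mathcal E^c)\le n^{-r}$ once the constants are chosen depending on $k,r,c$. Combining the light and heavy bounds then yields \eqref{eq:main}.
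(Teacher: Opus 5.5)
Your outline (net, light tuples at $\tau=\sqrt d/n=\sqrt{p/n}$, the bound $q_{\mathbf i}\le q_{\mathbf i}^2/\tau$ for the heavy mean, fiber degrees, dyadic boxes in three classes) follows the paper's architecture. The gap is the step that removes the logarithms, your class (c): you leave it undone, and the tools you propose for it would not work. (A small fix first: your displayed light threshold simplifies to $\sqrt d/n^{k/2}$. For $k\ge3$ that would make the heavy mean correction $\sqrt d\,n^{k/2-1}$, so you need $\tau=\sqrt d/n$.)

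\textbf{Property (ii) is too weak as formulated and proved.} A Chernoff bound on the aggregate count $E=\sum_{\mathbf b}e(Q_{\mathbf b})$, with mean $M=\sum_{\mathbf b}\mu_{\mathbf b}$, controls only $E\log(E/M)$. By the log-sum inequality this can be much smaller than $\sum_{\mathbf b}e(Q_{\mathbf b})\log(e(Q_{\mathbf b})/\mu_{\mathbf b})$. Boxes in class (c) carry very different weights, so you need the \emph{sum of per-box rates}. The paper obtains it as follows:
\begin{itemize}
\item it uses $I_\mu(u)=u\log(u/\mu)-u+\mu$ for $u\ge\mu$ and $0$ otherwise, which also replaces your per-collection ``either/or'';
\item it shows $\E e^{I_\mu(M)/2}\le2$ for binomial $M$;
\item it multiplies these bounds over disjoint, hence independent, boxes.
\end{itemize}
There is a second problem with (ii). Class (c) depends on the counts, so the union bound must run over all subcollections of the $L^k$ label tuples. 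After summing over the sets, the set-entropy term can absorb only $e^{O(kL\log n)}$ choices. That is far fewer than the $2^{L^k}$ subcollections when $k\ge3$ and $L\asymp\log n$. Lemma~\ref{lem:local} therefore adds the label cost $|\mathcal B|\log(eL)$, which later contributes only $O(\log(eL)/\sqrt d)=O(1)$.

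\textbf{Converting the rate bound into a weighted sum.} If you apply (ii) once to the whole class and then redistribute the budget, every coordinate class is charged at the largest weight in the collection, not at the weights of the boxes that use it. The weights $a_{\mathbf b}$ span $\asymp\log n$ dyadic scales, so this loses too much. The missing mechanism is a layer-cake argument. Write $a_\ell=2^{-\ell}$, $\rho_{\mathbf b}=a_{\mathbf b}/\tau$, $\lambda_{\mathbf b}=e(Q_{\mathbf b})/\mu_{\mathbf b}$, and $\alpha_{j,\ell}=|D_j^\ell|a_\ell^2$. Apply the joint bound to every superlevel set $\mathcal C_3(s)=\{\mathbf b:\ a_{\mathbf b}/(\sqrt d\log(e\rho_{\mathbf b}))>s\}$ and integrate in $s$. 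Each class $D_j^\ell$ then pays $h(D_j^\ell)$ only on an interval of length at most $a_\ell^2/\log(ena_\ell^2)$.

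This requires, on class (c), both $I_{\mathbf b}\ge e(Q_{\mathbf b})\log(e\rho_{\mathbf b})$ and $\rho_{\mathbf b}<na_j^2$ for all $j$. So your other two classes must change:
\begin{itemize}
\item Class (a) must contain every box with $\lambda_{\mathbf b}\le e^2\rho_{\mathbf b}$, not just those with $\lambda_{\mathbf b}\le C$. These still sum to $O(\sqrt d)$, since $a_{\mathbf b}e(Q_{\mathbf b})/\sqrt d=(\lambda_{\mathbf b}/\rho_{\mathbf b})\prod_j\alpha_{j,\ell_j}$.
\item Class (b) must be defined by amplitude, namely $na_j^2\le\rho_{\mathbf b}$ for some $j$, rather than by set size.
\end{itemize}

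Finally, your claim that the weighted entropy sum is $O(1)$ fails without a logarithmic denominator. The sum $\sum h(D_j^\ell)a_\ell^2=\sum\alpha_{j,\ell}\log(ena_\ell^2/\alpha_{j,\ell})$ already equals $\log(en)$ for a single coordinate of size close to $1$. It is the factor $1/\log(ena_\ell^2)$ produced by the layer cake that lets Lemma~\ref{lem:mass} give the bound $28k/3$.
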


Here $c>0$ is arbitrary and fixed, and $C_{k,r,c}$ may depend on $c$,
$k$, and $r$.

\medskip
\noindent\textit{Optimality of the scale and sparsity condition.}
The tensor norm always dominates the largest $\ell_2$-norm of its
one-dimensional fibers. In particular,
\[
 \|T-\E T\|\ge\max_f\|f\|_2,
\]
where the maximum ranges over all one-dimensional fibers $f$ of
$T-\E T$. This follows by fixing all but one argument at standard
basis vectors. For $p\le1/2$ and $d\ge c\log n$, even a fixed fiber
has norm at least $\sqrt{d/2}$ with probability at least
$1-e^{-d/8}$ by the Chernoff bound, showing that the $\sqrt d$ scale
is optimal. If $1\le d=np=o(\log n)$, the maximum fiber norm is
$\omega(\sqrt d)$ with high probability, so
$\|T-\E T\|/\sqrt d\to\infty$ in probability as $n\to\infty$.
Thus the condition $d\ge c\log n$ is optimal in order.

\bigskip 

The upper bound in Theorem~\ref{thm:main} also holds for entrywise
Bernoulli sampling of an arbitrary
deterministic tensor. 

\begin{corollary}[Weighted Bernoulli sampling]\label{cor:weighted}
Let $k\ge2$ be fixed, let $1\le n_1,\ldots,n_k\le N$ with $N\ge2$,
and let $A=(a_{\mathbf i})\in\R^{n_1\times\cdots\times n_k}$ be
deterministic. Let $\Omega=(\omega_{\mathbf i})$ have independent
entries with
\[
 \omega_{i_1,\ldots,i_k}\sim
 \operatorname{Bernoulli}(p_{i_1,\ldots,i_k}),
 \qquad 0\le p_{i_1,\ldots,i_k}\le1.
\]
Write $p_{\mathbf i}=p_{i_1,\ldots,i_k}$ and put
$p_*=\max_{\mathbf i}p_{\mathbf i}$.
For any $c,r>0$, there is a constant $C_{k,r,c}$ such that, if
$Np_*\ge c\log N$, then
\begin{equation}\label{eq:weighted}
 \Pp\left\{\|A\odot(\Omega-\E\Omega)\|>
 C_{k,r,c}\|A\|_\infty\sqrt{Np_*}\right\}\le N^{-r},
\end{equation}
where $\odot$ denotes entrywise multiplication and
$\|A\|_\infty=\max_{\mathbf i}|a_{\mathbf i}|$.
\end{corollary}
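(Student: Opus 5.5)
We will not reduce Corollary~\ref{cor:weighted} to Theorem~\ref{thm:main} as a black box. Instead we rerun the light--heavy argument, checking that each ingredient depends on the entries only through the bounds $|a_{\mathbf i}|\le\|A\|_\infty$ and $p_{\mathbf i}\le p_*$.

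First we normalize. By homogeneity we may take $\|A\|_\infty=1$. Embedding $A$ and $\Omega$ into $[N]^k$ by zero padding (with $p_{\mathbf i}=0$ and $a_{\mathbf i}=0$ outside the box) does not change the norm. So we may assume $n_1=\cdots=n_k=N$. Put $d=Np_*\ge c\log N$ and $W=A\odot(\Omega-\E\Omega)$.

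\textbf{Light part.} For unit vectors $x_1,\ldots,x_k$, split the index tuples into light ones, $\prod_j|x_{j,i_j}|\le\sqrt d/N$, and heavy ones. The light contribution is
\[
\sum_{\mathbf i\ \mathrm{light}}a_{\mathbf i}(\omega_{\mathbf i}-p_{\mathbf i})\prod_j x_{j,i_j}.
\]
This is a sum of independent centered terms. Each term has modulus at most $\sqrt d/N$, and the variance is at most
\[
\sum_{\mathbf i}p_{\mathbf i}\prod_j x_{j,i_j}^2\le p_*.
\]
Bernstein's inequality with $t=C\sqrt d$ gives tail $\exp(-cCN)$. This beats a $(1/2)$-net of size $5^{kN}$, and the usual net-rounding argument (via \eqref{eq:homogeneity}) then bounds the light part over all unit vectors. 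The centering bias is handled as in the classical argument. The mean of the light-restricted form equals minus the mean over heavy tuples, and that is bounded by
\[
p_*\sum_{\mathrm{heavy}}\prod_j|x_{j,i_j}|\le p_*\frac{N}{\sqrt d}\sum_{\mathbf i}\prod_j x_{j,i_j}^2=\sqrt d .
\]

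\textbf{Heavy part.} On heavy tuples we bound $|a_{\mathbf i}\omega_{\mathbf i}|\le\omega_{\mathbf i}$. So it suffices to bound $\sum_{\mathrm{heavy}}\omega_{\mathbf i}\prod_j|x_{j,i_j}|$, which involves only the $0$--$1$ tensor $\Omega$. The heavy-tuple bound is deterministic once two properties of $\Omega$ hold:
\begin{enumerate}
\item every fiber of $\Omega$ contains at most $C d$ nonzero entries;
\item the joint multibox discrepancy estimate holds: for collections of boxes $S_1\times\cdots\times S_k$, the counts $e(S_1,\ldots,S_k)$ either are $O(\mu)$ with $\mu=p_*\prod|S_j|$, or satisfy the $e\log(e/\mu)\lesssim$ entropy condition, jointly with shared groups counted once.
\end{enumerate}
Both properties follow by stochastic domination. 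The entries are independent Bernoulli$(p_{\mathbf i})$ with $p_{\mathbf i}\le p_*$, so they can be coupled below i.i.d.\ Bernoulli$(p_*)$ variables. Both events are monotone decreasing (upper bounds on counts), so their probability is at least that for the homogeneous model. Alternatively, the Chernoff and exponential-moment bounds apply verbatim with mean upper bound $p_*\prod|S_j|$. The fiber bound uses $d\ge c\log N$ and a union bound over $kN^{k-1}$ fibers. Then Proposition~\ref{prop:heavy} applies and gives $O(\sqrt d)$.

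\textbf{Main obstacle.} The one point needing care is that the homogeneous proof may use lower bounds or the exact mean $p$ somewhere, for instance by writing $\E e(S)=p\prod|S_j|$ with equality. We must check that the discrepancy lemma only uses $\E e(S)\le p_*\prod|S_j|$ and upper tails. Upper-tail Chernoff bounds are monotone in the means, so this should hold. Finally, we combine the light and heavy bounds for $W(x_1,\ldots,x_k)$, take the supremum, and choose constants so the failure probability is at most $N^{-r}$.
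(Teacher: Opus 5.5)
Your route is the paper's. You normalize $\|A\|_\infty=1$, pad to $[N]^k$, and apply Bernstein to the light tuples with summands bounded by $\tau=\sqrt d/N$ and variance at most $p_*$. You transfer the fiber-degree and multibox events from an i.i.d.\ Bernoulli$(p_*)$ tensor $\Omega^*\ge\Omega$ by monotone coupling. This settles your ``main obstacle,'' because hypothesis (ii) of Proposition~\ref{prop:heavy} uses the fixed reference means $p_*|Q|$ and $u\mapsto I_{p_*|Q|}(u)$ is nondecreasing. Finally, you bound the heavy mean term by $p_*/\tau=\sqrt d$.

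One step fails as written: the claim that net rounding via \eqref{eq:homogeneity} ``bounds the light part over all unit vectors.'' The light set $\mathcal L(x)$ depends on $x$, so $L_W(x)=\sum_{\mathbf i\in\mathcal L(x)}w_{\mathbf i}\prod_j x_{j,i_j}$ is not a multilinear form. Replacing $x$ by a nearby net point can move many tuples across the threshold $\tau$, so neither \eqref{eq:homogeneity} nor \eqref{eq:net-bound} gives any control of $L_W$ off the net. There is a second, independent problem. Even for the genuinely multilinear $W$, one-step rounding with a $1/2$-net gives only $|W(x)-W(y)|\le\frac k2\|W\|$, which does not close for $k\ge2$. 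You need $\eps<1/k$, for instance the paper's $1/(2k)$-net of size $(1+4k)^{kN}$, or an iterated expansion.

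The repair is to change the order of operations, as the paper does. Keep the Bernstein bound only at net points $y\in\mathcal T^k$. At each such $y$, put $q_{\mathbf i}=\prod_j|y_{j,i_j}|$ and use $|a_{\mathbf i}(\omega_{\mathbf i}-p_{\mathbf i})|\le\omega_{\mathbf i}+p_{\mathbf i}$ to write
\[
 |W(y)|\le|L_W(y)|+\sum_{\mathbf i\in\mathcal H(y)}\omega_{\mathbf i}q_{\mathbf i}
 +\sum_{\mathbf i\in\mathcal H(y)}p_{\mathbf i}q_{\mathbf i}.
\]
The last two sums are bounded by $C\sqrt d$ (Proposition~\ref{prop:heavy}) and by $\sqrt d$, uniformly over unit vectors. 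Only after this should you apply \eqref{eq:net-bound} to the full form $W$.

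This decomposition also corrects your ``centering bias'' sentence. The identity ``light mean equals minus heavy mean'' comes from the $x\perp\one$ matrix setting and is not valid here. Your light summands are already centered, and $\sum_{\mathcal H}p_{\mathbf i}q_{\mathbf i}$ enters through the heavy term; your numerical bound for it is nonetheless correct.

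Lastly, Proposition~\ref{prop:heavy} also requires $1\le d<N^2$. This follows from $p_*\le1$ once small $N$ is absorbed into the constant.
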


\section{Application to the Friedman--Wigderson model}\label{sec:fw}

We also consider the random hypergraph model of Friedman and
Wigderson~\cite[Section~6]{FW95}. Here the number of sampled
edges is fixed. The edge draws are independent, but the resulting
tensor entries are dependent. We state the concentration bound
in this section and give its proof in Section~\ref{sec:fw-proof}.

Let $m\ge1$ be an integer, and let $E_1,\ldots,E_m$ be independent
uniform random elements of $[n]^k$. Repetitions of an edge and
repeated coordinates within an edge are allowed. Define the
occupancy tensor $X=(X_{\mathbf i})$ by
\begin{equation}\label{eq:fw-model}
 X_{\mathbf i}=\sum_{\nu=1}^m\one\{E_\nu=\mathbf i\},
 \qquad d=\frac{m}{n^{k-1}},\qquad
 P=\E X=\frac dn\one^{\otimes k}.
\end{equation}
In this section $P$ denotes the mean of $X$, so $\E X_{\mathbf i}=d/n$.
The Friedman--Wigderson second eigenvalue is
\begin{equation}\label{eq:fw-lambda}
 \lambda_{2,d}(X)
 =\|X-P\|.
\end{equation}
This definition specifies the centering, not an exact regularity
constraint on the sampled tensor. Every fiber has expected degree $d$,
but its realized degree is random.

\begin{theorem}[Log-free second eigenvalue bound]\label{thm:fw}
Let $k\ge2$ be a fixed integer. For any $c,r>0$, there exists a constant
$C_{k,r,c}$ depending only on $k,r,c$ such that, for all $n\ge2$ and
$m\ge1$, if $ d\ge c\log n$, the tensor in \eqref{eq:fw-model} satisfies
\[
 \Pp\{\lambda_{2,d}(X)>C_{k,r,c}\sqrt d\}\le n^{-r}.
\]
\end{theorem}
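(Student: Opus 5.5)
We prove Theorem~\ref{thm:fw} with the same light--heavy decomposition as Theorem~\ref{thm:main}. The point is that the heavy-tuple bound (Proposition~\ref{prop:heavy}) is deterministic once two probabilistic inputs hold: a fiber-degree bound and the joint (multibox) discrepancy bound. The work therefore reduces to re-establishing those inputs, together with the light-part estimate, for the occupancy tensor $X$. Fix unit vectors $x_1,\dots,x_k$ and split $[n]^k$ into light tuples $L=\{\mathbf i:\prod_j|x_{j,i_j}|\le\sqrt d/n\}$ and heavy tuples. By multilinearity,
\[
(X-P)(x_1,\dots,x_k)=\sum_{\mathbf i\in L}(X_{\mathbf i}-d/n)\prod_j x_{j,i_j}+\sum_{\mathbf i\notin L}X_{\mathbf i}\prod_j x_{j,i_j}-\frac dn\sum_{\mathbf i\notin L}\prod_j x_{j,i_j}.
\]
The last term is deterministic. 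By Cauchy--Schwarz it is at most $(d/n)\sum_{\mathbf i\notin L}\prod_j x_{j,i_j}^2\cdot n/\sqrt d\le\sqrt d$.

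\textbf{Light part.} Write $\sum_{\mathbf i\in L}X_{\mathbf i}u_{\mathbf i}=\sum_{\nu=1}^m u_{E_\nu}\one\{E_\nu\in L\}$. This is a sum of $m$ i.i.d.\ bounded variables, each bounded by $\sqrt d/n$ with variance at most $\E u_{E}^2\le n^{-k}$. Bernstein then gives a tail $\exp(-c\,t^2/(m n^{-k}+t\sqrt d/n))$. Since $mn^{-k}=d/n$, taking $t=C\sqrt d$ yields $\exp(-c'Cn)$. This beats a net of size $C^{kn}$, so the light part is at most $C\sqrt d$ uniformly. In fact i.i.d.\ sampling with replacement is easier here than independent Bernoulli entries. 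We need the standard net-rounding step, handled exactly as in the Bernoulli case, because light sets depend on the vectors.

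\textbf{Degree bound.} For a fixed fiber $f$, the degree $\sum_{\mathbf i\in f}X_{\mathbf i}$ is $\mathrm{Bin}(m,n^{1-k})$ with mean $d$. A Chernoff bound plus a union bound over the $kn^{k-1}$ fibers shows all fiber degrees are at most $C_{r,c}d$ with probability $1-n^{-r-1}$, using $d\ge c\log n$.

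\textbf{Discrepancy.} This is the main obstacle, because box counts $X(B)=\sum_{\mathbf i\in B}X_{\mathbf i}$ for different boxes are dependent (multinomial). The joint discrepancy lemma used for Theorem~\ref{thm:main} controls tails of sums of box counts over collections of blocks via an exponential-moment/union-bound computation. We must show the same moment bounds hold here. Take disjoint boxes $B_1,\dots,B_s$ and $\lambda_j\ge0$. Then
\[
\E\exp\Bigl(\sum_j\lambda_jX(B_j)\Bigr)=\Bigl(1+\sum_j\mu(B_j)(e^{\lambda_j}-1)\Bigr)^m\le\exp\Bigl(\sum_j m\mu(B_j)(e^{\lambda_j}-1)\Bigr),
\]
where $\mu(B)=|B|/n^k$. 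The right side is exactly the Poisson/Bernoulli moment bound with means $\E X(B_j)=(d/n)|B_j|$. Hence every Chernoff-type inequality in the Bernoulli discrepancy proof that only uses upper-tail exponential moments of nonnegative combinations of disjoint box counts carries over verbatim.

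The subtlety is that the multibox lemma may be applied to overlapping blocks. In that case I would first refine into disjoint atoms, or use $X_{\mathbf i}\ge0$ together with the product form. Since $\E e^{\sum_{\mathbf i}\theta_{\mathbf i}X_{\mathbf i}}\le\exp(\sum_{\mathbf i}(d/n)(e^{\theta_{\mathbf i}}-1))$ holds for arbitrary $\theta_{\mathbf i}\ge0$, negative association is effectively available for all upper-tail estimates. A second concern is that $X_{\mathbf i}$ can exceed $1$, so heavy-tuple sums must use $X_{\mathbf i}$ as weights rather than indicators. Proposition~\ref{prop:heavy} only uses degree and discrepancy bounds on these counts, so this should be harmless. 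With both events holding with probability $1-n^{-r}$, Proposition~\ref{prop:heavy} bounds the heavy part by $C\sqrt d$, completing the proof.
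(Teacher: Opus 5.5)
Your outline matches the paper's proof. Both use Bernstein over the i.i.d.\ draws for the light part, the $\mathrm{Bin}(m,n^{1-k})$ fiber-degree bound, the $\sqrt d$ bound for the mean on heavy tuples, and Proposition~\ref{prop:heavy} fed by a multinomial version of Lemma~\ref{lem:local}. Your Poisson domination of $\E\exp(\sum_j\lambda_jX(B_j))$ is exactly the first display in the proof of Lemma~\ref{lem:multinomial}.

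The gap is your claim that the Bernoulli discrepancy proof then ``carries over verbatim.'' That proof never uses exponential moments of linear combinations of counts. It needs $\E\exp\bigl(\frac12\sum_{\mathbf b\in\mathcal B}I_{\mathbf b}\bigr)\le2^{|\mathcal B|}$, where $I_{\mathbf b}=I_{\mu_{\mathbf b}}(e(Q_{\mathbf b}))$ is a nonlinear function of the count. It obtains this by factoring the expectation over boxes via independence and applying Lemma~\ref{lem:rate} to each factor. For multinomial counts that factorization fails, and your MGF bound does not give the product bound by substitution.

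The missing step, which the paper supplies, has two parts.
\begin{enumerate}[label=(\alph*),leftmargin=*]
\item For $B\subseteq[s]$ and $t_b\ge0$, take $u_b\ge\mu_b$ with $I_{\mu_b}(u_b)=t_b$. Apply Markov's inequality to your MGF bound with $\theta_b=\log(u_b/\mu_b)$. This gives the joint tail $\Pp\{I_b>t_b\text{ for all }b\in B\}\le e^{-\sum_{b\in B}t_b}$.
\item Expand $\prod_b e^{I_b/2}=\sum_{B\subseteq[s]}\prod_{b\in B}(e^{I_b/2}-1)$ and write each factor as $\frac12\int_0^\infty e^{t/2}\one\{I_b>t\}\,dt$. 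Tonelli's theorem and the joint tail then give $2^s$.
\end{enumerate}
Alternatively, you could cite negative association of multinomial counts (Joag-Dev--Proschan) for the nondecreasing functions $e^{I_b/2}$. You would then apply Lemma~\ref{lem:rate} to each count, which is $\mathrm{Bin}(m,|Q_b|/n^k)$ with mean $\mu_b$. Your remark that negative association is ``effectively available'' does not prove this. Once $\E\exp(\frac12\sum_bI_b)\le2^s$ is established, the union bound over labels and coordinate sets in Lemma~\ref{lem:local} goes through unchanged.

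Two smaller points. First, your concern about overlapping blocks does not arise. Within a labeled partial partition, distinct label tuples give disjoint boxes, so only disjoint boxes ever enter the discrepancy bound. Second, unlike the Bernoulli case, here $d=m/n^{k-1}$ may be $\ge n^2$, where Proposition~\ref{prop:heavy} (which assumes $d<n^2$) is not available. You should split off that case. There $\tau=\sqrt d/n\ge1$ makes every tuple light, so the light bound and the net argument alone suffice.
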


For fixed $k\ge3$ and the same  model,
Friedman and Wigderson~\cite[Theorem~1.5]{FW95} proved, for
$d\ge Ck\log n$, that with high probability
 $\lambda_{2,d}(X)\le (C\log n)^{k/2}\sqrt d$.
Thus Theorem~\ref{thm:fw} removes the factor $(\log n)^{k/2}$.

\section{Proofs for the Bernoulli model}\label{sec:proof}

We first discretize the unit sphere and estimate the light tuples.
For the heavy tuples, we establish the fiber-degree and discrepancy
bounds and then carry out a deterministic summation over dyadic
classes. Throughout this section, $T$ is the Bernoulli tensor in
\eqref{eq:model}, $P=\E T$, and $d=np$, except in the explicitly
deterministic statement and proof of Proposition~\ref{prop:heavy}.

\subsection{Discretization}\label{sec:discretization}

We use a Euclidean net on the unit sphere in $\R^n$.
A set is an $\eps$-net if every point of the sphere is within
Euclidean distance $\eps$ of one of its points.
Unlike the lattice discretization in \cite[Section~4.1]{ZZ21},
the net below is chosen directly on the sphere.
\begin{lemma}[Euclidean sphere net {\cite[Corollary~4.2.11]{V26}}]\label{lem:net}
For $0<\eps<1$, the Euclidean unit sphere in $\R^n$ has a deterministic
$\eps$-net with at most $(1+2/\eps)^n$ elements.
\end{lemma}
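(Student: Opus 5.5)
The statement is Lemma~\ref{lem:net}. It is cited from \cite[Corollary~4.2.11]{V26}, so it only needs the standard volumetric argument, which I would reproduce as follows.

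First, take a maximal $\eps$-separated subset $\mathcal N$ of the unit sphere $S^{n-1}$. Such a set exists by Zorn's lemma, or simply by greedy selection, since compactness forces any $\eps$-separated set to be finite. Maximality gives the net property at once: if some point $x\in S^{n-1}$ were at distance greater than $\eps$ from every point of $\mathcal N$, then $\mathcal N\cup\{x\}$ would still be $\eps$-separated, contradicting maximality. The construction is deterministic, as the statement requires.

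Second, bound $|\mathcal N|$ by comparing volumes.
\begin{itemize}
\item The open balls $B(y,\eps/2)$ with $y\in\mathcal N$ are pairwise disjoint, because distinct points of $\mathcal N$ are more than $\eps$ apart.
\item Each of these balls lies in $B(0,1+\eps/2)$, since $\|y\|_2=1$.
\item Comparing Lebesgue measures and using that volume scales like $\rho^n$ gives
\[
 |\mathcal N|\,(\eps/2)^n\le(1+\eps/2)^n,
\]
hence $|\mathcal N|\le(1+2/\eps)^n$.
\end{itemize}

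There is no real obstacle. The only points needing care are the following.
\begin{itemize}
\item Finiteness of the separated set, needed for the greedy construction to terminate. This follows from the same volume bound, which caps the size of every $\eps$-separated set.
\item The convention that the net points lie on the sphere itself, which matters for the later use in the net argument. Here this holds automatically, because $\mathcal N\subset S^{n-1}$ by construction.
\end{itemize}
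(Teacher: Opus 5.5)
Your proposal is correct and is the standard packing argument behind the cited corollary in Vershynin's book: a maximal $\eps$-separated subset of $S^{n-1}$ is an $\eps$-net lying on the sphere, and the disjoint balls of radius $\eps/2$ inside $B(0,1+\eps/2)$ give $|\mathcal N|\le(1+2/\eps)^n$. The paper gives no proof of its own and only quotes this result, so your reconstruction matches the intended argument.
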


Fix $\eps=1/(2k)$ and choose such a deterministic net $\mathcal T$.
A product net consists of all $k$-tuples in $\mathcal T^k$;
it has at most $(1+4k)^{kn}$ elements.
For any tensor $W$, this net gives
\begin{equation}\label{eq:net-bound}
 \|W\|\le2\max_{(y_1,\ldots,y_k)\in\mathcal T^k}
                       |W(y_1,\ldots,y_k)|.
\end{equation}
Indeed, for unit vectors $x_j$, choose $y_j\in\mathcal T$ with
$\|x_j-y_j\|_2\le1/(2k)$. Replacing the vectors one at a time and
using \eqref{eq:homogeneity} gives
\[
 |W(x_1,\ldots,x_k)-W(y_1,\ldots,y_k)|
 \le\sum_{j=1}^k\|W\|\|x_j-y_j\|_2
 \le\frac12\|W\|.
\]
Taking the supremum proves \eqref{eq:net-bound}. We will apply this
inequality to the full multilinear form after combining the light
and heavy contributions.

Put $\tau=\sqrt d/n$. For $x=(x_1,\ldots,x_k)$, define the set of light tuples by
\begin{equation}\label{eq:light-set}
 \mathcal L(x)
 =\left\{\mathbf i\in[n]^k:
             \left|\prod_{j=1}^k x_{j,i_j}\right|\le\tau\right\}.
\end{equation}
The remaining tuples are heavy. The threshold is imposed on the
coordinate product. We will therefore
decompose all the vectors into coordinate classes when estimating
the heavy tuples.

\subsection{Light tuples}\label{sec:light}

\begin{lemma}[The light part on a product net]\label{lem:light}
Fix $k$ and $R>0$. Put $W=T-P$ and $\tau=\sqrt d/n$.
Let $\mathcal T$ be a deterministic $1/(2k)$-net of the Euclidean unit sphere,
with $|\mathcal T|\le(1+4k)^n$. For a tuple
$x=(x_1,\ldots,x_k)\in\mathcal T^k$, put
\[
 L_W(x)=\sum_{\mathbf i\in\mathcal L(x)}
           w_{\mathbf i}\prod_jx_{j,i_j}.
\]
There is a constant $B_{k,R}$ such that
\[
 \Pp\left\{\max_{x\in\mathcal T^k}|L_W(x)|>
                         B_{k,R}\sqrt d\right\}\le n^{-R}.
\]
\end{lemma}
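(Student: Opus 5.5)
Fix a tuple $x=(x_1,\ldots,x_k)\in\mathcal T^k$. The plan is to bound $L_W(x)$ by Bernstein's inequality and then take a union bound over the product net. The light part is a sum of independent centered terms $Z_{\mathbf i}=w_{\mathbf i}c_{\mathbf i}$, where
\[
 c_{\mathbf i}=\prod_jx_{j,i_j}\,\one\{\mathbf i\in\mathcal L(x)\}.
\]
Each term satisfies $|Z_{\mathbf i}|\le|c_{\mathbf i}|\le\tau=\sqrt d/n$, since $|w_{\mathbf i}|\le1$. The total variance is
\[
 \sum_{\mathbf i}\operatorname{Var}(t_{\mathbf i})c_{\mathbf i}^2\le p\sum_{\mathbf i}\prod_jx_{j,i_j}^2=p\prod_j\|x_j\|_2^2=p=d/n .
\]
Bernstein's inequality then gives, for $s>0$,
\[
 \Pp\{|L_W(x)|>s\}\le2\exp\left(-\frac{s^2/2}{d/n+\tau s/3}\right).
\]

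Next I take $s=B\sqrt d$. The denominator becomes $d/n+B d/(3n)=(1+B/3)d/n$, so the exponent equals $\frac{B^2 n}{2(1+B/3)}\ge \frac{B}{2}\,n$ once $B\ge3$, say. This is the point where the threshold $\tau=\sqrt d/n$ is calibrated: the sub-exponential term $\tau s$ is exactly of the same order $d/n$ as the variance. As a result, the tail is $\exp(-cBn)$, linear in both $B$ and $n$.

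The union bound is over $|\mathcal T^k|\le(1+4k)^{kn}=\exp(kn\log(1+4k))$ tuples. Choosing $B_{k,R}$ large enough that $\frac{B}{2}n-kn\log(1+4k)\ge n$ and then absorbing the factor $2$, I get a probability at most $2e^{-n}$. It remains to beat $n^{-R}$: since $e^{-n}\le n^{-R}$ fails only for $n$ below a threshold depending on $R$, I enlarge $B$ instead. I require
\[
 \frac{B}{2}n\ge kn\log(1+4k)+R\log n+\log 2,
\]
which holds for all $n\ge2$ if $B\ge 2k\log(1+4k)+2R+2$, using $\log n\le n$. This gives the bound $n^{-R}$. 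Note that no assumption on $d$ (such as $d\ge c\log n$) is needed here. The scale $d$ cancels because both the variance and the uniform bound scale with $d/n$.

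The only delicate step is verifying that the light-tuple truncation makes the Bernstein range term comparable to the variance term, i.e.\ $\tau\cdot\sqrt d=d/n$. Once that is checked, the rest is routine bookkeeping of constants. A minor point is that the light set $\mathcal L(x)$ depends on $x$. Since $x$ is deterministic for each fixed net element, the coefficients $c_{\mathbf i}$ are deterministic, and independence of the $w_{\mathbf i}$ suffices.
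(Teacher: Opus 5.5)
Your proposal is correct and follows essentially the same route as the paper: Bernstein's inequality for a fixed net tuple with range $\tau$ and variance at most $p$ gives the exponent $nB^2/(2+2B/3)$, and a union bound over the at most $(1+4k)^{kn}$ tuples finishes the proof. The only difference is constant bookkeeping. You linearize via $B^2/(2+2B/3)\ge B/2$ and use $\log n\le n$, whereas the paper chooses $B$ with $B^2/(2+(2/3)B)\ge k\log(1+4k)+R+1$ and uses $2e^{-(R+1)n}\le n^{-R}$.
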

\begin{proof}
For a fixed tuple, the summands are independent, centered, and bounded
in absolute value by $\tau$. Their total variance is at most
\[
 p\sum_{\mathbf i}\prod_jx_{j,i_j}^2
       =p\prod_j\|x_j\|_2^2=p.
\]
Bernstein's inequality~\cite[Theorem~2.9.5]{V26} gives
\[
 \Pp\{|L_W(x)|>B\sqrt d\}
 \le2\exp\left\{-\frac{B^2d}{2p+(2/3)\tau B\sqrt d}\right\}
 =2\exp\left\{-n\frac{B^2}{2+(2/3)B}\right\}.
\]
Choose $B=B_{k,R}$ so that
\[
 \frac{B^2}{2+(2/3)B}\ge k\log(1+4k)+R+1.
\]
A union bound over at most $(1+4k)^{kn}$ tuples then gives a failure
probability at most $2e^{-(R+1)n}\le n^{-R}$ for $n\ge2$.
\end{proof}

\subsection{Heavy tuples}\label{sec:heavy}

A \emph{box} is a set of the form
\[
 Q=S_1\times\cdots\times S_k,
 \qquad S_j\subseteq[n]\quad(j=1,\ldots,k).
\]
The set $S_j$ is the \emph{coordinate set} of $Q$ in mode $j$.

We estimate the heavy contribution of the nonnegative tensor $T$;
the corresponding contribution of its mean is at most $\sqrt d$,
as shown in \eqref{eq:heavy-mean}. We need two probabilistic inputs:
a bound on all fiber degrees and a joint discrepancy estimate for
collections of boxes. The rest of the argument is deterministic.
The auxiliary entropy estimate in Appendix~\ref{app:dyadic} is used
to sum over dyadic levels without a factor depending on their number.

\subsubsection{The bounded degree property}\label{sec:degrees}

A one-dimensional fiber is obtained by fixing all indices except one.
For a fiber $F\subseteq[n]^k$, its degree is simply
\[
 \deg(F)=\sum_{\mathbf i\in F}t_{\mathbf i}.
\]
There are at most $kn^{k-1}$ such fibers, and each degree has mean $d$.
These are degrees of $(k-1)$-tuples, not vertex degrees of a
$k$-uniform hypergraph.
\begin{lemma}[Maximum fiber degree]\label{lem:fibers}
For fixed $k\ge2$, $R>0$, and $c>0$, there is a constant
$\kappa=\kappa(k,R,c)>1$ such that, if $d=np\ge c\log n$, all
one-dimensional fibers of $T$ have degree at most $\kappa d$,
except on an event of probability at most
$n^{-R}$.
\end{lemma}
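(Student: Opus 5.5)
Each fiber degree is a sum of $n$ independent Bernoulli$(p)$ variables with mean $d=np$. The plan is a Chernoff upper-tail bound for a single fiber, then a union bound over the at most $kn^{k-1}$ fibers. The only point needing care is that $\kappa$ must depend on $c$, because $d$ may be as small as $c\log n$.

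\emph{Single fiber.} Fix a fiber $F$. Write $\deg(F)=\sum_{\mathbf i\in F}t_{\mathbf i}$, a sum of $n$ independent Bernoulli$(p)$ variables. I will use the multiplicative Chernoff bound in the form
\[
 \Pp\{\deg(F)\ge \kappa d\}\le \Bigl(\frac{e^{\kappa-1}}{\kappa^{\kappa}}\Bigr)^{d}
 =\exp\{-d\,h(\kappa)\},\qquad h(\kappa)=\kappa\log\kappa-\kappa+1,
\]
valid for $\kappa>1$. It follows from $\E e^{\lambda\deg(F)}\le\exp\{d(e^\lambda-1)\}$ with the choice $\lambda=\log\kappa$. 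Note that $h(\kappa)\to\infty$ as $\kappa\to\infty$.

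\emph{Union bound.} Since $d\ge c\log n$ and $h(\kappa)>0$, each fiber fails with probability at most $\exp\{-c\,h(\kappa)\log n\}=n^{-c\,h(\kappa)}$. Summing over at most $kn^{k-1}$ fibers gives a total failure probability of at most $k\,n^{k-1-c\,h(\kappa)}$. Now choose $\kappa=\kappa(k,R,c)>1$ large enough that
\[
 c\,h(\kappa)\ge k+R-1+\log_2 k .
\]
With this choice, $k\,n^{k-1-c\,h(\kappa)}\le k\,n^{-R-\log_2 k}\le n^{-R}$ for $n\ge2$, since $k\le n^{\log_2 k}$ when $n\ge 2$.

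\emph{Edge case and main obstacle.} The case $p=1$ is trivial, since then every degree equals $d$. The only real subtlety is the dependence on $c$: when $d\asymp c\log n$ with $c$ small, a bounded multiple of $d$ is not a deviation of order $\sqrt{d\log n}$. So the sub-Gaussian form of Bernstein's inequality is insufficient, and the Poisson-type tail $e^{-d h(\kappa)}$ with $h(\kappa)\sim\kappa\log\kappa$ is what allows a constant $\kappa$ to work.
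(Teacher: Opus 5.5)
Your proposal is correct and is essentially the paper's proof. Both use the Chernoff bound $\exp\{-d(\kappa\log\kappa-\kappa+1)\}$ for each $\operatorname{Bin}(n,p)$ fiber degree, a union bound over the at most $kn^{k-1}$ fibers, and a choice of $\kappa$ with $c(\kappa\log\kappa-\kappa+1)\ge R+k-1+\log_2 k$, where $kn^{-\log_2 k}\le1$ absorbs the factor $k$; the paper's slightly larger threshold $(R+k+1+\log_2 k)/c$ only adds a spare factor $n^{-2}$.
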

\begin{proof}
There are at most $kn^{k-1}$ fibers and each degree is
$\operatorname{Bin}(n,p)$. Chernoff's bound gives, for $\kappa>1$,
\[
 \Pp\{\operatorname{Bin}(n,p)\ge\kappa d\}
 \le\exp\{-d(\kappa\log\kappa-\kappa+1)\}.
\]
Choose $\kappa$ such that
$\kappa\log\kappa-\kappa+1\ge (R+k+1+\log_2 k)/c$. The union bound and
$d\ge c\log n$ then give a failure probability at most
\[
 kn^{k-1}n^{-(R+k+1+\log_2 k)}
 =n^{-R-2}\,k n^{-\log_2 k}\le n^{-R}.
\]
\end{proof}

\subsubsection{Simultaneous multibox discrepancy}\label{sec:entropy}

We measure the discrepancy of a box by an upper-tail rate. For a
mean $\mu$ and a count $u\gg\mu$, the Chernoff exponent has order
$u\log(u/\mu)$. Counts in disjoint boxes are independent, so an
exponential-moment bound for one rate gives a bound for the sum of
the rates over a collection of boxes.

For $\mu>0$ and $u\ge0$, define the upper-tail Poisson rate
\begin{equation}\label{eq:rate}
 I_\mu(u)=
 \begin{cases}
 u\log(u/\mu)-u+\mu,&u\ge\mu,\\
 0,&0\le u<\mu.
 \end{cases}
\end{equation}
For a binomial count $M$ with mean $\mu$, the classical Chernoff
bound~\cite[Theorem~2.3.1]{V26} gives
\[
 \Pp\{M\ge u\}\le e^{-I_\mu(u)},\qquad u\ge\mu.
\]
Thus a large value of $I_\mu(u)$ means
that observing a count at least $u$ has small probability.
For $u<\mu$, we set $I_\mu(u)=0$ so that counts below the mean
contribute nothing to the upper-tail discrepancy. In this range,
$e^{-I_\mu(u)}=1$ gives only the trivial probability bound
$\Pp\{M\ge u\}\le1$.

\begin{lemma}[Exponential moment of the rate]\label{lem:rate}
If $M\sim\operatorname{Bin}(V,p)$, where $V\ge1$ and $\mu=pV>0$, then
\[
 \Pp\{I_\mu(M)>t\}\le e^{-t}\quad(t\ge0),
 \qquad
 \E\exp\bigl(I_\mu(M)/2\bigr)\le2.
\]
\end{lemma}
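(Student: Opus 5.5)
The plan is to derive the tail bound from the Chernoff inequality stated just before the lemma, using monotonicity of $I_\mu$. The exponential moment then follows by integrating the tail.

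\emph{Tail bound.} For $t=0$ there is nothing to prove, so fix $t>0$. On $[\mu,\infty)$ the function $u\mapsto I_\mu(u)$ is continuous and strictly increasing, since its derivative is $\log(u/\mu)>0$ for $u>\mu$. It vanishes at $u=\mu$ and tends to infinity. Hence there is a unique $u_t>\mu$ with $I_\mu(u_t)=t$. Because $I_\mu=0$ on $[0,\mu)$, the event $\{I_\mu(M)>t\}$ is contained in $\{M>u_t\}\subseteq\{M\ge u_t\}$. The Chernoff bound $\Pp\{M\ge u\}\le e^{-I_\mu(u)}$, valid for $u\ge\mu$, then gives $\Pp\{I_\mu(M)>t\}\le e^{-I_\mu(u_t)}=e^{-t}$. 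If one prefers to avoid the intermediate value step, one can take $u_t=\inf\{u\ge\mu: I_\mu(u)>t\}$ and use continuity in the same way. The argument does not need $M$ to be integer-valued.

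\emph{Exponential moment.} Write $Y=I_\mu(M)\ge0$. By the layer-cake formula,
\[
 \E e^{Y/2}=1+\int_0^\infty \tfrac12 e^{t/2}\,\Pp\{Y>t\}\,dt
 \le1+\tfrac12\int_0^\infty e^{-t/2}\,dt=2 .
\]

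\emph{Main point to check.} The only delicate step is justifying the Chernoff bound $\Pp\{M\ge u\}\le e^{-I_\mu(u)}$ for $\operatorname{Bin}(V,p)$ at every real $u\ge\mu$, and the paper takes this from \cite[Theorem~2.3.1]{V26}. If needed, it can be verified directly. For $\lambda\ge0$, $\E e^{\lambda M}=(1+p(e^\lambda-1))^V\le\exp(\mu(e^\lambda-1))$, so Markov's inequality with $\lambda=\log(u/\mu)$ yields exactly $e^{-I_\mu(u)}$.
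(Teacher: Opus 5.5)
Your proposal is correct and follows the paper's proof essentially verbatim: you invert the strictly increasing rate on $[\mu,\infty)$ to reduce the tail of $I_\mu(M)$ to the Chernoff bound at $u_t$, and then integrate that tail with the layer-cake identity to get $\E e^{I_\mu(M)/2}\le 2$. Your direct moment-generating-function check of the Chernoff bound, via $(1+p(e^\lambda-1))^V\le e^{\mu(e^\lambda-1)}$ and $\lambda=\log(u/\mu)$, is also correct; it verifies a step the paper only cites.
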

\begin{proof}
For each $t\ge0$, let $u_t\ge\mu$ be the unique solution of
$I_\mu(u_t)=t$. Such a solution exists because $I_\mu$ is continuous
and strictly increasing from zero to infinity on $[\mu,\infty)$.
Thus $\{I_\mu(M)>t\}=\{M>u_t\}$, and the Chernoff bound above gives
\[
 \Pp\{I_\mu(M)>t\}
 \le\Pp\{M\ge u_t\}
 \le e^{-I_\mu(u_t)}=e^{-t}.
\]
For the second assertion, use
$e^{z/2}-1=\frac12\int_0^z e^{t/2}\,dt$ for $z\ge0$.
Then
\begin{align*}
 \E e^{I_\mu(M)/2}
 &=1+\frac12\int_0^\infty e^{t/2}
             \Pp\{I_\mu(M)>t\}\,dt\\
 &\le1+\frac12\int_0^\infty e^{-t/2}\,dt=2.
 \qedhere
\end{align*}
\end{proof}

For a box $Q\subseteq[n]^k$, define its edge count by
\begin{equation}\label{eq:edge-count}
 e(Q)=\sum_{\mathbf i\in Q}t_{\mathbf i}.
\end{equation}
Then $\E e(Q)=p|Q|$.
Fix an integer $L\ge1$. In mode $j$, a
\emph{labeled partial partition} consists of pairwise disjoint nonempty
sets $D_j^\ell\subseteq[n]$, for a subset of labels
$\ell\in\{0,\ldots,L-1\}$. 
Each set $D_j^\ell$ is called a \emph{coordinate class} in mode $j$:
its elements are possible values of the index $i_j$.
The class is identified by the pair $(j,\ell)$; classes in different
modes are distinct even if their underlying subsets of $[n]$ coincide.
For a tuple of assigned labels $\mathbf{b}=(\ell_1,\ldots,\ell_k)$,
write $\mathbf{b}_j=\ell_j$ for its $j$th label and define
\[
 Q_{\mathbf{b}}=D_1^{\ell_1}\times\cdots\times D_k^{\ell_k},\qquad
 \mu_{\mathbf{b}}=p|Q_{\mathbf{b}}|,\qquad
 I_{\mathbf{b}}=I_{\mu_{\mathbf{b}}}(e(Q_{\mathbf{b}})).
\]
Here $\mu_{\mathbf{b}}=\E e(Q_{\mathbf{b}})$ is the expected edge count,
and $I_{\mathbf{b}}$ is the upper-tail rate \eqref{eq:rate} of the observed
count $e(Q_{\mathbf{b}})$ relative to $\mu_{\mathbf{b}}$.
Given a collection $\mathcal B$ of box labels, let
\[
 \mathcal B_j=\{\mathbf{b}_j:\ \mathbf{b}\in\mathcal B\}
\]
be the set of labels used in mode $j$. Thus $\ell\in\mathcal B_j$
exactly when $D_j^\ell$ is the $j$th factor of at least one box
in the collection. Write
\[
 h(S)=|S|\log\frac{en}{|S|},\qquad
H(\mathcal B)=\sum_{j=1}^k\sum_{\ell\in\mathcal B_j}h(D_j^\ell).
\]

Here $h(S)$ is the entropy cost of choosing a set $S$. Indeed, for $s=|S|$,
\[
 \binom ns\le\left(\frac{en}{s}\right)^s=e^{h(S)}.
\]
And
$H(\mathcal B)$ is the total entropy cost of all coordinate classes
used by the boxes in $\mathcal B$, with each class counted once even
if it occurs in several boxes. For example, when $k=2$, the boxes $D_1^0\times D_2^0$ and
$D_1^0\times D_2^1$ share the first-mode class $D_1^0$.
Thus this shared factor contributes $h(D_1^0)$, not $2h(D_1^0)$,
to $H(\mathcal B)$.

The following lemma bounds the total upper-tail rate by
$H(\mathcal B)$ and an additional term for choosing the box labels.

\begin{lemma}[Multibox discrepancy]\label{lem:local}
Fix an integer $L\ge1$ and $R>0$, and put $K=2(R+k+3)$.
With probability at least
$1-n^{-R}$, simultaneously for every labeled partial partition in every
mode and every nonempty collection $\mathcal B$ of its box labels,
\begin{equation}\label{eq:local-budget}
 \sum_{\mathbf{b}\in\mathcal B}I_{\mathbf{b}}
 \le K\bigl(H(\mathcal B)+|\mathcal B|\log(eL)\bigr).
\end{equation}
\end{lemma}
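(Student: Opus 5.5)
The plan is a union bound over the combinatorial data, with an exponential-moment (Chernoff-type) bound on the left side for each fixed configuration. It is convenient to reduce first to the data that actually enter \eqref{eq:local-budget}. For a fixed collection $\mathcal B$, the inequality involves only the classes $D_j^\ell$ with $\ell\in\mathcal B_j$. So it suffices to prove the bound simultaneously over all \emph{configurations}. A configuration consists of:
\begin{itemize}[leftmargin=*]
\item for each mode $j$, a set of labels $\mathcal B_j\subseteq\{0,\ldots,L-1\}$;
\item pairwise disjoint nonempty sets $D_j^\ell$ for $\ell\in\mathcal B_j$;
\item a nonempty collection $\mathcal B\subseteq\prod_j\mathcal B_j$ whose mode-$j$ projections are exactly the $\mathcal B_j$.
\end{itemize}
The boxes $Q_{\mathbf b}$, $\mathbf b\in\mathcal B$, are pairwise disjoint: two distinct labels differ in some mode $j$, and there the classes are disjoint. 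Hence the counts $e(Q_{\mathbf b})$ are independent binomials. (If some $\mu_{\mathbf b}$ were zero the box would be empty; this cannot happen since the classes are nonempty, and $p>0$.)

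For a fixed configuration, independence and Lemma~\ref{lem:rate} give
\[
 \E\exp\Bigl(\tfrac12\sum_{\mathbf b\in\mathcal B}I_{\mathbf b}\Bigr)\le 2^{|\mathcal B|}.
\]
By Markov's inequality,
\[
 \Pp\Bigl\{\sum_{\mathbf b}I_{\mathbf b}>K\bigl(H(\mathcal B)+|\mathcal B|\log(eL)\bigr)\Bigr\}
 \le 2^{|\mathcal B|}\exp\Bigl\{-\tfrac K2 H(\mathcal B)-\tfrac K2|\mathcal B|\log(eL)\Bigr\}.
\]

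Next, group the configurations by their \emph{shape}: the label sets $\mathcal B_j$, the sizes $s_j^\ell=|D_j^\ell|$, and the collection $\mathcal B$. For a given shape, the number of choices of the actual sets is at most
\[
 \prod_{j}\prod_{\ell\in\mathcal B_j}\binom n{s_j^\ell}\le e^{H},
\]
where $H$ is determined by the shape. Hence the total failure probability for that shape is at most
\[
 \exp\bigl\{-(K/2-1)H-|\mathcal B|\bigl((K/2)\log(eL)-\log2\bigr)\bigr\}.
\]

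It remains to sum over shapes, and this is where the bookkeeping must be organized carefully. Write $m_j=|\mathcal B_j|$ and $m=|\mathcal B|$, so that $m_j\le m$.
\begin{itemize}[leftmargin=*]
\item The number of choices of $\mathcal B_j$ is at most $L^{m_j}$, hence at most $\prod_j L^{m_j}\le L^{km}$ over all modes.
\item The number of choices of $\mathcal B$ given the $\mathcal B_j$, with $|\mathcal B|=m$, is at most $(\prod_j m_j)^m\le L^{km}$. It is not bounded by $e^{O(m)}$ alone, which is why the term $|\mathcal B|\log(eL)$ is needed.
\item The sizes $s_j^\ell$ range over $[n]$, giving at most $n^{\sum_j m_j}$ choices. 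This factor is absorbed by the entropy: since $h(S)\ge\log n$ for every nonempty $S$ (as $s\log(en/s)\ge\log(en)$ for $1\le s\le n$), we have $H\ge\sum_j m_j\log n$.
\end{itemize}
The main obstacle is precisely this split: the $n$-dependence is charged to $H$, while the $L$-dependence is charged to $m\log(eL)$. One must also ensure that $H\ge \sum_j m_j\log n\ge k\log n$, since every mode uses at least one class. This yields the $n^{-R}$ decay regardless of $m$, because $H$ is always at least $k\log n$.

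With $K=2(R+k+3)$, the sum over shapes becomes
\[
 \sum_{m\ge1}\ \sum_{\text{shapes}}\exp\bigl\{-(R+k+2)H-m\bigl((R+k+3)\log(eL)-\log2\bigr)\bigr\}.
\]
We bound the shape count by
\[
 n^{\sum_j m_j}L^{2km}\le e^{H}L^{2km}.
\]
After this, at least $(R+k+1)H\ge(R+k+1)k\log n$ remains in the exponent, together with a factor $e^{-cm}$ that is summable over $m$. This gives at most $n^{-R}$ for $n\ge2$. If the exponent $2k$ on $L$ exceeds $R+k+3$, the $L$-factors are paid by the surplus in $H$ instead: each label set has $m_j\le H/\log n$, and since $L$ is at most $O(\log n)$ in applications we could also bound $L^{m_j}\le e^{m_j\log(eL)}$. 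I expect the constant checking here to require some care, possibly by counting $\mathcal B$ more cleverly as a subset of $\prod_j\mathcal B_j$, using $\binom{\prod m_j}{m}\le (e\prod_j m_j/m)^m$.
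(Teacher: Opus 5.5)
Your framework matches the paper's. For a fixed assignment the boxes are disjoint, so independence and Lemma~\ref{lem:rate} give $\E\exp(\frac12\sum_{\mathbf b}I_{\mathbf b})\le2^{|\mathcal B|}$, and Markov's inequality is applied at level $K(H(\mathcal B)+|\mathcal B|\log(eL))$. In the union bound, the choice of the sets $D_j^\ell$ is paid by $H$, and the choice of labels is paid by $|\mathcal B|\log(eL)$. Your count via $\binom{n}{s}\le e^{h(S)}$ and $h(S)\ge\log(en)$ is equivalent to the paper's product of factors $Z=\sum_{S\ne\varnothing}e^{-Kh(S)/2}\le n^{-R-1}$.

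The gap is in the label count, and it is structural, not a matter of constants. You first choose the projections $\mathcal B_j$ (up to $L^{\sum_j m_j}\le L^{km}$ ways) and then $\mathcal B\subseteq\prod_j\mathcal B_j$ (up to $L^{km}$ ways), for $L^{2km}$ in total. Against the available factor $2^m(eL)^{-(R+k+3)m}$, the series in $m$ then has ratio $2e^{-(R+k+3)}L^{k-R-3}$. This exceeds $1$ for large $L$ whenever $k>R+3$. The lemma is stated for every $R>0$ and every integer $L\ge1$, with no relation between $L$ and $n$, so this case is not covered.

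The remedies you sketch do not settle it:
\begin{itemize}
\item Paying $L^{\sum_j m_j}$ out of the surplus $(R+k+1)H\ge(R+k+1)\sum_j m_j\log n$ requires $\log L\lesssim\log n$. That holds in the application ($L\le1+\log_2n$) but is not a hypothesis of the lemma.
\item The bound $L^{m_j}\le e^{m_j\log(eL)}$ holds for every $L$ and by itself does not change the count.
\item The sharper count via $\binom{\prod_j m_j}{m}$ is only mentioned, not carried out.
\end{itemize}

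The missing observation is that $\mathcal B$ determines its projections $\mathcal B_j$. So the label data of a configuration is simply an $m$-element subset of $\{0,\ldots,L-1\}^k$, with at most $\binom{L^k}{m}\le L^{km}$ choices, and no separate count of the $\mathcal B_j$ is needed. The ratio then becomes $2L^k(eL)^{-(R+k+3)}=2e^{-(R+k+3)}L^{-(R+3)}\le2e^{-(k+3)}<\frac12$, uniformly in $L$. Hence the sum over $m$ is a geometric series bounded by $1$.

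Your size count supplies the remaining factor. For fixed $\mathcal B$, summing over the at most $n^{\sum_j m_j}$ size choices gives at most $n^{\sum_j m_j}e^{-(R+k+2)(1+\log n)\sum_j m_j}\le n^{-(R+k+1)k}\le n^{-R}$, because every $m_j\ge1$. This direct count of $\mathcal B\subseteq\{0,\ldots,L-1\}^k$ is exactly how the paper sums over collections, and with it your argument goes through.
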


\smallskip\noindent\emph{Comparison with graph discrepancy.}
For $k=2$, the graph discrepancy estimate used in the
Kahn--Szemer\'edi argument~\cite{FO05,LR15} can be written in the
following form. Simultaneously for all nonempty $S_1,S_2\subseteq[n]$,
with $Q=S_1\times S_2$ and reference mean $\mu=p|Q|$, either
\[
 e(Q)\le C\mu
 \quad\text{or}\quad
 e(Q)\log\frac{e(Q)}{\mu}
 \le C\bigl(h(S_1)+h(S_2)\bigr).
\]
When $e(Q)\ge e^2\mu$, the quantity $e(Q)\log(e(Q)/\mu)$ is comparable
to $I_\mu(e(Q))$, so both estimates use the same large-deviation rate
in this range. The classical bound is already simultaneous over all
rectangles, but controls each rectangle separately.

In contrast, \eqref{eq:local-budget} controls the sum of the rates over
every subcollection of boxes from a coordinate partition. Adding
separate rectangle bounds repeats the entropy cost of a row or column
class whenever it is reused. The joint bound counts each shared class
only once in $H(\mathcal B)$, with an additional
$|\mathcal B|\log(eL)$ term for choosing the box labels.
This shared-class accounting is used directly in the multilevel
heavy-tuple summation below.

\begin{proof}
\emph{Fixed boxes.}
Fix a collection $\mathcal B\subseteq\{0,\ldots,L-1\}^k$ with $m\ge1$ elements.
An assignment $(D_j^\ell)_{1\le j\le k,\,\ell\in\mathcal B_j}$ is valid
if its sets are nonempty subsets of $[n]$ and pairwise disjoint within each mode.
For a fixed valid assignment, distinct label tuples give disjoint boxes,
so the counts $e(Q_{\mathbf{b}})\sim\operatorname{Bin}(|Q_{\mathbf{b}}|,p)$ are independent.
Lemma~\ref{lem:rate} therefore gives
\[
 \E\exp\left\{\frac12\sum_{\mathbf{b}\in\mathcal B}I_{\mathbf{b}}\right\}
 =\prod_{\mathbf{b}\in\mathcal B}\E e^{I_{\mathbf{b}}/2}\le2^m.
\]
 Applying Markov's
inequality to the nonnegative exponential above gives, for any $a>0$,
\begin{align*}
 \Pp\left\{\sum_{\mathbf{b}\in\mathcal B}I_{\mathbf{b}}>a\right\}
 &=\Pp\left\{\exp\left(\frac12\sum_{\mathbf{b}\in\mathcal B}I_{\mathbf{b}}\right)
                    >e^{a/2}\right\}\\
 &\le e^{-a/2}\E\exp\left(\frac12\sum_{\mathbf{b}\in\mathcal B}I_{\mathbf{b}}\right)
 \le2^m e^{-a/2}.
\end{align*}
Taking $a=K(H(\mathcal B)+m\log(eL))$ and writing
$e^{-Km\log(eL)/2}=(eL)^{-Km/2}$ yields
\begin{equation}\label{eq:fixed-budget-tail}
 \Pp\left\{\sum_{\mathbf{b}\in\mathcal B}I_{\mathbf{b}}>
       K\bigl(H(\mathcal B)+m\log(eL)\bigr)\right\}
 \le2^m(eL)^{-Km/2}e^{-KH(\mathcal B)/2}.
\end{equation}

\smallskip\noindent\emph{Uniform bound.}
To sum over coordinate sets, put
\[
 Z=\sum_{\varnothing\ne S\subseteq[n]}e^{-K h(S)/2}.
\]
Using $\binom ns\le(en/s)^s$, the monotonicity of $s\log(en/s)$ on
$[1,n]$, and $K/2-1=R+k+2$, we obtain
\begin{align}
 Z
 &=\sum_{s=1}^n\binom ns
       \exp\left\{-\frac K2s\log\frac{en}{s}\right\}\notag\\
 &\le\sum_{s=1}^n(en/s)^{-(K/2-1)s}
 \le n(en)^{-(K/2-1)}
 \le n^{-R-1}\le1.
 \label{eq:Z}
\end{align}
For fixed $\mathcal B$, the sum below chooses one nonempty subset
$D_j^\ell\subseteq[n]$ for each $j\in[k]$ and $\ell\in\mathcal B_j$,
with the sets for each fixed $j$ pairwise disjoint. Only labels in
$\mathcal B_j$ are assigned sets. In each summand, $H(\mathcal B)$
is computed from the chosen sets, and its definition gives
\[
 e^{-K H(\mathcal B)/2}
 =\prod_{j=1}^k\prod_{\ell\in\mathcal B_j}e^{-K h(D_j^\ell)/2}.
\]
All summands are nonnegative, so dropping the disjointness restrictions
can only increase the sum. Each $D_j^\ell$ can then range freely over
all nonempty subsets of $[n]$. By the distributive law for finite sums,
the unrestricted sum of these products equals the product of the
one-set sums. Hence
\begin{align*}
 \sum_{\substack{\varnothing\ne D_j^\ell\subseteq[n]\ (j\in[k],\,\ell\in\mathcal B_j)\\
                  D_j^\ell\cap D_j^{\ell'}=\varnothing\ (j\in[k],\,\ell,\ell'\in\mathcal B_j,\,\ell\ne\ell')}}
       e^{-K H(\mathcal B)/2}
 &\le\prod_{j=1}^k\prod_{\ell\in\mathcal B_j}
       \left(\sum_{\varnothing\ne S\subseteq[n]}e^{-K h(S)/2}\right)\\
 &=Z^{\sum_{j=1}^k|\mathcal B_j|}\le Z.
\end{align*}
Each factor is $Z$, and there are $\sum_j|\mathcal B_j|$ factors.
Since $\mathcal B$ is nonempty, this number is at least one;
together with $0\le Z\le1$, this proves the last inequality.

Let $\mathcal F$ be the event that there exist a nonempty collection
$\mathcal B\subseteq\{0,\ldots,L-1\}^k$ and a valid assignment
$(D_j^\ell)$ such that
\[
 \sum_{\mathbf{b}\in\mathcal B}I_{\mathbf{b}}>
 K\bigl(H(\mathcal B)+|\mathcal B|\log(eL)\bigr).
\]
By \eqref{eq:fixed-budget-tail}, the union bound, and the preceding
estimate for the sum over valid assignments,
\begin{align*}
 \Pp(\mathcal F)
 &\le\sum_{m=1}^{L^k}
     \sum_{\substack{\mathcal B\subseteq\{0,\ldots,L-1\}^k\\|\mathcal B|=m}}
       2^m(eL)^{-Km/2}Z\\
 &=Z\sum_{m=1}^{L^k}\binom{L^k}{m}2^m(eL)^{-Km/2}.
\end{align*}
The equality counts the $\binom{L^k}{m}$ label collections of size $m$.
Using $\binom{L^k}{m}\le L^{km}$ and extending the nonnegative sum gives
\[
 \Pp(\mathcal F)
 \le Z\sum_{m=1}^\infty
       \left(2e^{-K/2}L^{k-K/2}\right)^m.
\]
Since $K/2\ge k+3$ and $L\ge1$, the ratio satisfies
\[
 2e^{-K/2}L^{k-K/2}\le2e^{-(k+3)}<\frac12.
\]
The geometric series therefore yields
\[
 \Pp(\mathcal F)\le Z\sum_{m=1}^\infty2^{-m}=Z\le n^{-R}.
\]
This proves \eqref{eq:local-budget} simultaneously for all choices.
\end{proof}

\subsubsection{The deterministic heavy-tuple bound}\label{sec:heavy-sum}

We next bound the nonnegative heavy sum under the bounded degree
and multibox discrepancy assumptions. The estimate holds uniformly
over all unit vectors. 

For unit vectors $x_1,\ldots,x_k$, write $x=(x_1,\ldots,x_k)$ and
define the heavy index set at threshold $\tau=\sqrt d/n$ by
\[
 \mathcal H(x)=\left\{\mathbf i\in[n]^k:
                  \prod_{j=1}^k|x_{j,i_j}|>\tau\right\}.
\]
Thus $\mathcal H(x)=[n]^k\setminus\mathcal L(x)$.

\begin{proposition}[Deterministic multilevel summation]\label{prop:heavy}
Fix $c,\kappa,K>0$. Suppose $1\le d<n^2$, $d\ge c\log n$, and
$\tau=\sqrt d/n$. Let
\begin{equation}\label{eq:L}
 L=1+\left\lfloor\log_2(1/\tau)\right\rfloor.
\end{equation}
Let $T$ be a tensor with nonnegative entries.
Suppose that $T$ satisfies both of the following properties:
\begin{enumerate}[label=(\roman*),leftmargin=*]
\item every one-dimensional fiber has degree at most $\kappa d$;
\item for every labeled partial partition with labels in $\{0,\ldots,L-1\}$
      in each mode, and every nonempty subcollection of its boxes, the local
      entropy bound \eqref{eq:local-budget} holds with constant $K$,
      using $I_{\mathbf{b}}=I_{\mu_{\mathbf{b}}}(e(Q_{\mathbf{b}}))$ and $\mu_{\mathbf{b}}=(d/n)|Q_{\mathbf{b}}|$.
\end{enumerate}
Then there is a constant $C$, depending only on $k,c,\kappa,K$, such that
\[
 \sup_{\|x_1\|_2=\cdots=\|x_k\|_2=1}
 \sum_{\mathbf i\in\mathcal H(x)}
       t_{\mathbf i}\prod_{j=1}^k|x_{j,i_j}|
 \le C\sqrt d.
\]
\end{proposition}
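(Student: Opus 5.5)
The plan is to reduce the supremum to a sum over dyadic blocks and then split the blocks into three classes, controlled respectively by the fiber-degree bound (i), the unit-length constraints, and the multibox discrepancy (ii).

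\emph{Dyadic blocks.} Fix unit vectors $x_1,\ldots,x_k$. For $\ell\in\{0,\ldots,L-1\}$ put $D_j^\ell=\{i:2^{-\ell-1}<|x_{j,i}|\le2^{-\ell}\}$. These sets form a labeled partial partition in each mode.
\begin{itemize}
\item A coordinate with $|x_{j,i}|\le2^{-L}<\tau$ forces the whole coordinate product below $\tau$, since all other coordinates are at most $1$. So every heavy tuple lies in some box $Q_{\mathbf b}$.
\item Put $s_j^\ell=|D_j^\ell|$ and $w_{\mathbf b}=\prod_j2^{-\mathbf b_j}$. Then the heavy sum is at most $\sum_{\mathbf b\in\mathcal B_0}e(Q_{\mathbf b})\,w_{\mathbf b}$. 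Here $\mathcal B_0$ is the set of labels with $w_{\mathbf b}>\tau$, up to a $2^k$ factor coming from the lower endpoints of the dyadic classes.
\item The unit-length constraints give $\sum_\ell s_j^\ell4^{-\ell}\le4$ for each $j$.
\end{itemize}

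\emph{Class 1: $e(Q_{\mathbf b})\le e^2\mu_{\mathbf b}$.} Write
\[
|Q_{\mathbf b}|w_{\mathbf b}=\frac{\prod_j s_j^{\mathbf b_j}4^{-\mathbf b_j}}{w_{\mathbf b}}\le\frac{\prod_j s_j^{\mathbf b_j}4^{-\mathbf b_j}}{\tau}.
\]
Summing over all $\mathbf b$ and using the unit-length constraints, the contribution is at most $C(d/n)\,4^k/\tau=C\sqrt d$. This class needs no counting of levels.

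\emph{Class 2: degree-controlled blocks.} For each mode $j$, bound (i) gives $e(Q_{\mathbf b})\le\kappa d\prod_{i\ne j}s_i^{\mathbf b_i}$.
\begin{itemize}
\item For $k=2$, these are the usual blocks where $e(Q)\le\kappa d\min(s_1,s_2)$ is the better estimate.
\item I would put a block in Class 2 when the ratio $2^{-\mathbf b_j}/\bigl(\sqrt d\,\prod_{i\ne j}2^{-\mathbf b_i}\cdots\bigr)$ is suitably unbalanced. Then $\kappa d\prod_{i\ne j}s_i2^{-\mathbf b_i}\cdot2^{-\mathbf b_j}$ is dominated by a geometric series in the level of mode $j$, weighted by $\prod_{i\ne j}s_i4^{-\mathbf b_i}$.
\item Summing the geometric series first, and then the remaining modes by the unit-length constraints, should give $O(\sqrt d)$.
\end{itemize}

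\emph{Class 3: the remaining blocks.} These have $e(Q_{\mathbf b})>e^2\mu_{\mathbf b}$ and are not degree-controlled. On them $I_{\mathbf b}\ge\tfrac12e(Q_{\mathbf b})\log(e(Q_{\mathbf b})/\mu_{\mathbf b})$.
\begin{itemize}
\item Apply \eqref{eq:local-budget} to the whole collection $\mathcal B_3$ jointly, and also to suitable subcollections, e.g.\ those blocks whose ratio $e/\mu$ lies in a given dyadic range. This gives
\[
\sum_{\mathcal B_3}e_{\mathbf b}\log\frac{e_{\mathbf b}}{\mu_{\mathbf b}}\le2K\Bigl(\sum_j\sum_{\ell\in(\mathcal B_3)_j}s_j^\ell\log\frac{en}{s_j^\ell}+|\mathcal B_3|\log(eL)\Bigr).
\]
\item The key point is that each class $D_j^\ell$ pays its entropy once. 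Since $s_j^\ell\le4^{\ell+1}$, the weighted entropy $s_j^\ell\log(en/s_j^\ell)$ is comparable to $s_j^\ell4^{-\ell}\cdot4^\ell\log(en4^{-\ell})$. After weighting by $w_{\mathbf b}$, this should sum to $O(\sqrt d)$ via the dyadic entropy estimate of Appendix~\ref{app:dyadic}.
\item The term $|\mathcal B_3|\log(eL)$, with $\log L\asymp\log\log n$, must be absorbed. Here I would use $d\ge c\log n$ and the fact that on non-degree-controlled blocks $\log(e_{\mathbf b}/\mu_{\mathbf b})$ is large (at least order $\log(n/\sqrt d)$-ish). A per-block cost of $\log(eL)$ is then dominated by a fraction of $e_{\mathbf b}\log(e_{\mathbf b}/\mu_{\mathbf b})$, which leaves the entropy term.
\item Finally, convert the bound on $\sum e\log(e/\mu)$ into a bound on $\sum e_{\mathbf b}w_{\mathbf b}$. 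Split by the value of $w_{\mathbf b}\sqrt{\mu_{\mathbf b}\cdots}$, as in Kahn--Szemer\'edi, and apply Cauchy--Schwarz against the unit-length constraints.
\end{itemize}

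\emph{Main obstacle.} The hard step is Class 3: choosing the subcollections fed to \eqref{eq:local-budget} and organizing the weighted sum so that neither the $k-2$ free levels nor the $\log(eL)$ label cost produces a logarithmic factor. I would first try grouping blocks by the dyadic value of $e_{\mathbf b}/\mu_{\mathbf b}$, applying the joint bound to each group, and summing the groups geometrically.
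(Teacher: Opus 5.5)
Your dyadic reduction and the two easy classes are sound. With the natural condition $2^{-\mathbf b_j}\le\prod_{i\ne j}2^{-\mathbf b_i}/\sqrt d$, your degree class is the paper's $\mathcal C_1$; your geometric series in the mode-$j$ level replaces the paper's use of disjointness of the $D_j^\ell$ inside the same fibers. Your mass bound is also correct. The gap is Class~3, which is the whole content of the proposition, and two ingredients you propose there fail.

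\textbf{First gap: the mass threshold is too low.} You stop the mass class at $e(Q_{\mathbf b})\le e^2\mu_{\mathbf b}$. Put $\lambda_{\mathbf b}=e(Q_{\mathbf b})/\mu_{\mathbf b}$, $\rho_{\mathbf b}=w_{\mathbf b}/\tau$ and $\alpha_j=s_j^{\mathbf b_j}4^{-\mathbf b_j}$. Your own computation gives $w_{\mathbf b}e(Q_{\mathbf b})/\sqrt d=(\lambda_{\mathbf b}/\rho_{\mathbf b})\prod_j\alpha_j$, so the mass bound covers every block with $\lambda_{\mathbf b}\le e^2\rho_{\mathbf b}$, and this larger threshold is essential. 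In your Class~3, $\lambda_{\mathbf b}$ can be $O(1)$. Take $k=2$ and $x_1=x_2$ spread evenly over a set of size about $\sqrt{n/d}$. The diagonal block is heavy, is not degree-controlled when $d>1$, has $\mu_{\mathbf b}\approx1$, and can have $e(Q_{\mathbf b})\approx e^3$. This has three consequences:
\begin{itemize}
\item Your claim that $\log(e_{\mathbf b}/\mu_{\mathbf b})$ is of order $\log(n/\sqrt d)$ on non-degree-controlled blocks is false.
\item Per-block absorption of $\log(eL)$ fails, since $e(Q_{\mathbf b})\log\lambda_{\mathbf b}$ is a constant while $\log(eL)\to\infty$.
\item On such blocks the rate pays only a constant per edge, so $h(D_j^\ell)$ would have to be paid at weights up to $4^{-\ell}$. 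That charge is $\alpha_{j,\ell}\log(en4^{-\ell}/\alpha_{j,\ell})$, already $\log(en)$ when $x_j$ is a standard basis vector.
\end{itemize}
On the paper's $\mathcal C_3$, where $\lambda_{\mathbf b}>e^2\rho_{\mathbf b}$, one has $I_{\mathbf b}\ge e(Q_{\mathbf b})\log(e\rho_{\mathbf b})$. This factor $\log(e\rho_{\mathbf b})$ is what cancels the $\log(en/s)$ in $h(D_j^\ell)$.

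\textbf{Second gap: no mechanism for the weighted sum.} You do not say how to turn the unweighted inequality \eqref{eq:local-budget} into a bound on $\sum_{\mathbf b}w_{\mathbf b}e(Q_{\mathbf b})$ without paying for the other $k-1$ levels. Grouping by dyadic $\lambda_{\mathbf b}$ does not control the weights inside a group. Charging each class's full entropy to every group or block that uses it is what produced the $(\log n)^{k-2}$ loss in \cite{ZZ21}.

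The missing device is a layer-cake over the weight. Put $\omega_{\mathbf b}=w_{\mathbf b}/(\sqrt d\log(e\rho_{\mathbf b}))$, so that $w_{\mathbf b}e(Q_{\mathbf b})/\sqrt d\le\omega_{\mathbf b}I_{\mathbf b}$ on $\mathcal C_3$. Then write $\sum_{\mathbf b}\omega_{\mathbf b}I_{\mathbf b}=\int_0^\infty\sum_{\omega_{\mathbf b}>s}I_{\mathbf b}\,ds$ and apply hypothesis~\textup{(ii)} to each upper level set $\{\omega_{\mathbf b}>s\}$. This is allowed because \textup{(ii)} covers all subcollections.

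A class $D_j^\ell$ then pays $h(D_j^\ell)$ only for $s$ below the largest $\omega_{\mathbf b}$ among the boxes using it. Since $1<\rho_{\mathbf b}<n4^{-\ell}$ and $u/\log(eu)$ is increasing, that largest value is at most $4^{-\ell}/\log(en4^{-\ell})$. This depends only on the class's own level. The resulting charge $\alpha_{j,\ell}\log(en4^{-\ell}/\alpha_{j,\ell})/\log(en4^{-\ell})$ is exactly what Lemma~\ref{lem:mass} sums to $O(1)$ in each mode.

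The label term becomes $K\log(eL)\sum_{\mathbf b}\omega_{\mathbf b}\le K2^k\log(eL)/\sqrt d=O(1)$, since $L\lesssim\log n\lesssim d$. So the $\log(eL)$ cost is absorbed through the weights, not block by block. No Kahn--Szemer\'edi case split or Cauchy--Schwarz step is needed.
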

\begin{proof}
Fix a tensor $T$ satisfying \textup{(i)}--\textup{(ii)} and unit vectors
$x_1,\ldots,x_k$. We bound the heavy sum uniformly in these vectors.

\subsubsection*{Dyadic decomposition}
Put $a_\ell=2^{-\ell}$ and, for $0\le\ell<L$, let
\[
 D_j^\ell=\{i:\ |x_{j,i}|>\tau,\quad a_\ell/2<|x_{j,i}|\le a_\ell\}.
\]
Since $2^{-L}<\tau\le2^{-(L-1)}$, these classes partition the indices
with $|x_{j,i}|>\tau$. Retain their labels when discarding empty
classes to obtain a labeled partial partition. Define
\begin{equation}\label{eq:alpha}
 \alpha_{j,\ell}=|D_j^\ell|a_\ell^2,\qquad
 \sum_{\ell=0}^{L-1}\alpha_{j,\ell}
 \le4\sum_{\ell=0}^{L-1}\sum_{i\in D_j^\ell}|x_{j,i}|^2
 \le4.
\end{equation}
Unless stated otherwise, level sums below range over $0,\ldots,L-1$;
empty classes have mass zero.

For each nonempty box $Q_{\mathbf b}=D_1^{\ell_1}\times\cdots\times D_k^{\ell_k}$,
write $a_j=a_{\ell_j}$, $s_j=|D_j^{\ell_j}|$, and
$\alpha_j=\alpha_{j,\ell_j}$. Set
\begin{equation}\label{eq:box-parameters}
 a_{\mathbf b}=\prod_j a_j,\qquad
 \mu_{\mathbf b}=\frac dn\prod_j s_j>0,\qquad
 \rho_{\mathbf b}=\frac{a_{\mathbf b}}{\tau},\qquad
 \lambda_{\mathbf b}=\frac{e(Q_{\mathbf b})}{\mu_{\mathbf b}}.
\end{equation}
Let $\mathcal B$ index the nonempty boxes with $a_{\mathbf b}>\tau$.
Every heavy tuple has $|x_{j,i_j}|>\tau$ in each mode, because all
coordinate magnitudes are at most one. It therefore belongs to a
unique dyadic box, whose amplitude satisfies
$a_{\mathbf b}\ge\prod_j|x_{j,i_j}|>\tau$. Since $T$ is nonnegative,
\begin{equation}\label{eq:heavy-box-domination}
 \sum_{\mathbf i\in\mathcal H(x)}t_{\mathbf i}\prod_j|x_{j,i_j}|
 \le\sum_{\mathbf b\in\mathcal B}a_{\mathbf b}e(Q_{\mathbf b}).
\end{equation}

Following the class-by-class organization of \cite[Section~4.3]{ZZ21},
partition the box labels into
\begin{align*}
 \mathcal C_1
 &=\{\mathbf b\in\mathcal B:\ na_j^2\le\rho_{\mathbf b}
                                      \text{ for some }j\},\\
 \mathcal C_2
 &=\{\mathbf b\in\mathcal B\setminus\mathcal C_1:
                               \lambda_{\mathbf b}\le e^2\rho_{\mathbf b}\},\\
 \mathcal C_3&=\mathcal B\setminus(\mathcal C_1\cup\mathcal C_2).
\end{align*}
We control these classes by fiber degrees, dyadic masses, and joint
discrepancy, respectively.

\subsubsection*{Boxes in $\mathcal C_1$: the fiber-degree bound}
Fix a mode $j$ and the other level labels, and put
$q=(\prod_{i\ne j}a_i)/\sqrt d$. Since
$\rho_{\mathbf b}=na_{\mathbf b}/\sqrt d$, the condition
$na_j^2\le\rho_{\mathbf b}$ is equivalent to $a_j\le q$.
As $\ell_j$ varies, the sets $D_j^{\ell_j}$ are disjoint.
Thus the corresponding boxes use disjoint portions of the same
$\prod_{i\ne j}s_i$ mode-$j$ fibers. By \textup{(i)},
\[
 \sum_{\substack{\ell_j:\ \mathbf b\in\mathcal B\\a_j\le q}}e(Q_{\mathbf b})
 \le\kappa d\prod_{i\ne j}s_i.
\]
Using $a_j\le q$ before summing gives
\[
 \sum_{\substack{\ell_j:\ \mathbf b\in\mathcal B\\a_j\le q}}
       \frac{a_{\mathbf b}e(Q_{\mathbf b})}{\sqrt d}
 \le\frac{q\prod_{i\ne j}a_i}{\sqrt d}\,
       \kappa d\prod_{i\ne j}s_i
 =\kappa\prod_{i\ne j}\alpha_i.
\]
Sum over the other labels and then over $j$. Every box in
$\mathcal C_1$ is counted at least once, and all terms are nonnegative.
Factoring the sums over independent labels and using \eqref{eq:alpha},
\begin{equation}\label{eq:unbalanced-sum}
 \sum_{\mathbf b\in\mathcal C_1}
       \frac{a_{\mathbf b}e(Q_{\mathbf b})}{\sqrt d}
 \le\kappa\sum_{j=1}^k
       \prod_{i\ne j}\left(\sum_{\ell_i}\alpha_{i,\ell_i}\right)
 \le k\kappa\,4^{k-1}.
\end{equation}

\subsubsection*{Boxes in $\mathcal C_2$: the mass bound}
Since $|Q_{\mathbf b}|a_{\mathbf b}^2=\prod_j\alpha_j$,
\[
 \frac{a_{\mathbf b}e(Q_{\mathbf b})}{\sqrt d}
 =\frac{\lambda_{\mathbf b}}{\rho_{\mathbf b}}\prod_j\alpha_j
 \le e^2\prod_j\alpha_j
 \qquad(\mathbf b\in\mathcal C_2).
\]
Enlarging the nonnegative mass sum to all label tuples and factoring
it, \eqref{eq:alpha} gives
\begin{equation}\label{eq:moderate-sum}
 \sum_{\mathbf b\in\mathcal C_2}
       \frac{a_{\mathbf b}e(Q_{\mathbf b})}{\sqrt d}
 \le e^2\prod_{j=1}^k\left(\sum_{\ell_j}\alpha_{j,\ell_j}\right)
 \le e^2 4^k.
\end{equation}

\subsubsection*{Boxes in $\mathcal C_3$: the joint discrepancy bound}
There is nothing to prove if $\mathcal C_3$ is empty. Otherwise,
its defining conditions are
\begin{equation}\label{eq:high}
 1<\rho_{\mathbf b}<na_j^2\quad(j\in[k]),\qquad
 \lambda_{\mathbf b}>e^2\rho_{\mathbf b}.
\end{equation}
Hence \eqref{eq:rate} yields
\[
 I_{\mathbf b}\ge e(Q_{\mathbf b})(\log\lambda_{\mathbf b}-1)
               \ge e(Q_{\mathbf b})\log(e\rho_{\mathbf b}).
\]
For $s\ge0$, put
\[
 \mathcal C_3(s)=\left\{\mathbf b\in\mathcal C_3:
        s<\frac{a_{\mathbf b}}{\sqrt d\log(e\rho_{\mathbf b})}\right\},
 \qquad H(\varnothing)=0.
\]
Each box belongs to this subcollection on an interval of length
$a_{\mathbf b}/(\sqrt d\log(e\rho_{\mathbf b}))$.
Tonelli's theorem and \textup{(ii)} therefore give
\begin{align}
 \sum_{\mathbf b\in\mathcal C_3}\frac{a_{\mathbf b}e(Q_{\mathbf b})}{\sqrt d}
 &\le\int_0^\infty\sum_{\mathbf b\in\mathcal C_3(s)}I_{\mathbf b}\,ds
 \notag\\
 &\le K\int_0^\infty H(\mathcal C_3(s))\,ds
       +K\log(eL)\int_0^\infty|\mathcal C_3(s)|\,ds.
 \label{eq:high-integrated}
\end{align}
Hypothesis \textup{(ii)} covers every subcollection, so it applies
even though $\mathcal C_3(s)$ depends on the box counts.

For the entropy integral, if $D_j^\ell$ occurs in $\mathcal C_3(s)$,
some box using it satisfies \eqref{eq:high}. The monotonicity of
$u/\log(eu)$ on $[1,\infty)$ gives
\[
 s<\frac{\rho_{\mathbf b}}{n\log(e\rho_{\mathbf b})}
 \le\frac{a_\ell^2}{\log(en a_\ell^2)},\qquad na_\ell^2>1.
\]
Each such class is counted once in $H(\mathcal C_3(s))$, for an
interval of length at most the displayed bound. Its integrated
entropy cost is therefore at most
\[
 \frac{h(D_j^\ell)a_\ell^2}{\log(en a_\ell^2)}
 =\frac{\alpha_{j,\ell}\log(en a_\ell^2/\alpha_{j,\ell})}
          {\log(en a_\ell^2)}
 \le\alpha_{j,\ell}\left(1+
       \frac{\log_+(1/\alpha_{j,\ell})}{\log(en a_\ell^2)}\right).
\]
Classes unused by $\mathcal C_3$ contribute zero.
Summing these bounds and applying Lemma~\ref{lem:mass} in each mode,
with $t_\ell=na_\ell^2=n4^{-\ell}$ and the masses in \eqref{eq:alpha},
yields
\begin{equation}\label{eq:total-entropy-charge}
 \int_0^\infty H(\mathcal C_3(s))\,ds
 \le\sum_{j=1}^k\sum_{\substack{0\le\ell<L\\na_\ell^2>1}}
       \alpha_{j,\ell}\left(1+
       \frac{\log_+(1/\alpha_{j,\ell})}{\log(en a_\ell^2)}\right)
 \le\frac{28}{3}k.
\end{equation}

For the second integral, $\log(e\rho_{\mathbf b})>1$ gives
\begin{equation}\label{eq:overhead-sum}
 \int_0^\infty|\mathcal C_3(s)|\,ds
 =\sum_{\mathbf b\in\mathcal C_3}
       \frac{a_{\mathbf b}}{\sqrt d\log(e\rho_{\mathbf b})}
 \le\frac1{\sqrt d}\sum_{\ell_1,\ldots,\ell_k\ge0}
       2^{-\ell_1-\cdots-\ell_k}
 =\frac{2^k}{\sqrt d}.
\end{equation}
Since $d\ge1$ and $d\ge c\log n$, \eqref{eq:L} implies
$L\le1+\log_2 n\le C_c d$, and hence
$\log(eL)/\sqrt d\le C_c$.
Substituting these estimates into \eqref{eq:high-integrated} gives
\begin{equation}\label{eq:high-sum}
 \sum_{\mathbf b\in\mathcal C_3}\frac{a_{\mathbf b}e(Q_{\mathbf b})}{\sqrt d}
 \le K\left(\frac{28}{3}k+
                 \frac{2^k\log(eL)}{\sqrt d}\right)
 \le C_{k,c,K}.
\end{equation}

Combining \eqref{eq:unbalanced-sum}, \eqref{eq:moderate-sum}, and
\eqref{eq:high-sum} in \eqref{eq:heavy-box-domination} proves the desired
bound. The constant is independent of the chosen vectors, so taking
the supremum completes the proof.
\end{proof}

\subsection{Completion of the proof}\label{sec:completion}

We combine the light estimate, the two events used for the heavy
estimate, and the net approximation \eqref{eq:net-bound}.

\begin{proof}[Proof of Theorem~\ref{thm:main}]
By increasing $C_{k,r,c}$ if necessary,
we may assume that $n$ is sufficiently large that
$d\ge c\log n\ge1$. Set $R=r+2$.
Apply Lemma~\ref{lem:local} with the deterministic $L$ in
\eqref{eq:L}, Lemma~\ref{lem:fibers}, and Lemma~\ref{lem:light}.
Their three events hold simultaneously except on an event of
probability at most
$3n^{-(r+2)}\le n^{-r}$.
On this intersection, fix unit vectors $x_1,\ldots,x_k$ and put
$q_{\mathbf i}=\prod_j|x_{j,i_j}|$.
Proposition~\ref{prop:heavy}, with $d=np$, gives
\[
 \sum_{\mathbf i\in\mathcal H(x)}t_{\mathbf i}q_{\mathbf i}
 \le C_{\mathrm h}\sqrt d,
\]
where $C_{\mathrm h}$ depends only on $k,r,c$, and the bound holds
uniformly over all such unit vectors.

The mean tensor is $P=\E T=p\,\one^{\otimes k}$, whose entries all
equal $p$, so $W=T-P$ has entries $w_{\mathbf i}=t_{\mathbf i}-p$.
The heavy contribution of $P$ in absolute value is bounded by
$p\sum_{\mathbf i\in\mathcal H(x)}q_{\mathbf i}$.
For a heavy tuple, $q_{\mathbf i}>\tau$, hence
$q_{\mathbf i}\le q_{\mathbf i}^2/\tau$. Therefore
\begin{equation}\label{eq:heavy-mean}
\begin{aligned}
 p\sum_{\mathbf i\in\mathcal H(x)}q_{\mathbf i}
 &\le\frac p\tau\sum_{\mathbf i\in\mathcal H(x)}q_{\mathbf i}^2
 \le\frac p\tau\sum_{\mathbf i\in[n]^k}q_{\mathbf i}^2\\
 &=\frac p\tau\prod_{j=1}^k\|x_j\|_2^2
 =\frac{np}{\sqrt d}=\sqrt d.
\end{aligned}
\end{equation}
Here we used $\|x_j\|_2=1$, $\tau=\sqrt d/n$, and $d=np$.

Now take $x=(x_1,\ldots,x_k)\in\mathcal T^k$.
Splitting the full multilinear form into light and heavy tuples gives
\[
 W(x_1,\ldots,x_k)
 =L_W(x)+\sum_{\mathbf i\in\mathcal H(x)}
                 (t_{\mathbf i}-p)\prod_{j=1}^k x_{j,i_j}.
\]
By the triangle inequality and $|t_{\mathbf i}-p|\le t_{\mathbf i}+p$,
\begin{align*}
 |W(x_1,\ldots,x_k)|
 &\le |L_W(x)|+\sum_{\mathbf i\in\mathcal H(x)}|t_{\mathbf i}-p|q_{\mathbf i}\\
 &\le |L_W(x)|
       +\sum_{\mathbf i\in\mathcal H(x)}t_{\mathbf i}q_{\mathbf i}
       +p\sum_{\mathbf i\in\mathcal H(x)}q_{\mathbf i}.
\end{align*}
The first term is at most $B_{k,R}\sqrt d$ by Lemma~\ref{lem:light}.
The second is at most $C_{\mathrm h}\sqrt d$ by the heavy bound above,
and the third is at most $\sqrt d$ by \eqref{eq:heavy-mean}. Consequently,
\[
 \max_{x\in\mathcal T^k}|W(x_1,\ldots,x_k)|
 \le (B_{k,R}+C_{\mathrm h}+1)\sqrt d.
\]

We now apply \eqref{eq:net-bound} to the full multilinear form $W$,
after the light and heavy contributions have been combined. This gives
\[
 \|W\|\le
       2\max_{(y_1,\ldots,y_k)\in\mathcal T^k}
            |W(y_1,\ldots,y_k)|\le C_{k,r,c}\sqrt d.
\]
This proves \eqref{eq:main}.
\end{proof}

\begin{proof}[Proof of Corollary~\ref{cor:weighted}]
If $A=0$, the result is immediate. By homogeneity, assume
$\|A\|_\infty=1$.
Pad $A$ with zeros to an $N\times\cdots\times N$ tensor and extend
$\Omega$ by zero entries, with the corresponding probabilities also
set to zero. Since the added entries
of $A$ are zero, this does not change either random tensor in the
statement or the spectral and entrywise maximum norms of $A$. It is
therefore enough to consider equal dimensions. 
Set
$d=Np_*$, $\tau=\sqrt d/N$, and $R=r+2$.
By increasing $C_{k,r,c}$ if necessary, we may assume that
$d\ge c\log N\ge1$.

We use the proof of Theorem~\ref{thm:main} with the centered entries
$a_{\mathbf i}(\omega_{\mathbf i}-p_{\mathbf i})$. For fixed vectors
in the product net, the light summands are independent, centered, and
bounded in absolute value by $\tau$. Their total variance is at most
\[
 \sum_{\mathbf i}p_{\mathbf i}a_{\mathbf i}^2
        \prod_{j=1}^k x_{j,i_j}^2
 \le p_*\prod_{j=1}^k\|x_j\|_2^2=p_*.
\]
Thus the Bernstein estimate and the union bound in
Lemma~\ref{lem:light} hold with  probability at least $1-N^{-R}$.

For the two inputs to Proposition~\ref{prop:heavy}, use independent
uniform random variables $U_{\mathbf i}$ on $[0,1]$ to realize
\[
 \omega_{\mathbf i}=\one\{U_{\mathbf i}\le p_{\mathbf i}\},
 \qquad
 \omega^*_{\mathbf i}=\one\{U_{\mathbf i}\le p_*\}.
\]
Then $\Omega^*=(\omega^*_{\mathbf i})$ has independent
Bernoulli$(p_*)$ entries, and $\Omega\le\Omega^*$ entrywise.
Thus every fiber degree and box count of $\Omega$ is bounded by its
counterpart for $\Omega^*$. Since $u\mapsto I_{p_*|Q|}(u)$ is
nondecreasing, the simultaneous degree and discrepancy bounds for
$\Omega^*$ in Lemmas~\ref{lem:fibers} and~\ref{lem:local} also hold
for $\Omega$, with reference means $p_*|Q|=(d/N)|Q|$ and total failure
probability at most $2N^{-R}$.

The light, fiber-degree, and discrepancy events therefore hold
simultaneously with probability at least
$1-3N^{-R}\ge1-N^{-r}$. Since $p_*\le1$, we have
$d\le N<N^2$, so Proposition~\ref{prop:heavy} applies to $\Omega$
on this event.

It remains to compare the heavy part with the nonnegative count tensor
$\Omega$. Write $q_{\mathbf i}=\prod_j|x_{j,i_j}|$. Since
$|a_{\mathbf i}|\le1$,
\[
 \left|\sum_{\mathbf i\in\mathcal H(x)}
       a_{\mathbf i}(\omega_{\mathbf i}-p_{\mathbf i})
       \prod_jx_{j,i_j}\right|
 \le\sum_{\mathbf i\in\mathcal H(x)}
       (\omega_{\mathbf i}+p_{\mathbf i})q_{\mathbf i}.
\]
Proposition~\ref{prop:heavy} bounds the part containing
$\omega_{\mathbf i}$. For the mean part,
\[
 \sum_{\mathbf i\in\mathcal H(x)}p_{\mathbf i}q_{\mathbf i}
 \le\frac{p_*}{\tau}\sum_{\mathbf i}q_{\mathbf i}^2
 =\frac{p_*}{\tau}=\sqrt d.
\]
Both bounds are uniform over the unit vectors. Combining the light and
heavy estimates and applying the same product-net argument proves
\eqref{eq:weighted}.
\end{proof}

\section{Proof of Theorem~\ref{thm:fw}}\label{sec:fw-proof}

Throughout this section, $X,d,P$ are as in \eqref{eq:fw-model}.
We use the deterministic heavy bound from Proposition~\ref{prop:heavy}.
Its probabilistic inputs are a fiber-degree estimate and a discrepancy
bound for dependent box counts. The light contribution is a sum over
the independent edge draws.

\subsection{A joint rate estimate for multinomial counts}

The counts in disjoint boxes, together with the count outside their
union, have a multinomial distribution. A sampled edge belongs to at
most one box. Consequently, for nonnegative parameters, their joint
moment generating function is bounded by that of independent Poisson
variables with the reference means below. We use this to prove the
joint rate estimate needed in Lemma~\ref{lem:local}. The reference
means may exceed the actual means; in our application they are equal.

\begin{lemma}[Joint exponential moment]\label{lem:multinomial}
Let $Q_1,\ldots,Q_s$ be pairwise disjoint nonempty subsets of $[n]^k$,
and write $e(Q_b)=\sum_{\mathbf i\in Q_b}X_{\mathbf i}$ for their
occupancy counts under \eqref{eq:fw-model}.
Choose arbitrary reference means
\[
 \mu_b\ge \frac{m|Q_b|}{n^k}>0,
 \qquad I_b=I_{\mu_b}(e(Q_b)).
\]
Then
\begin{equation}\label{eq:mixed-mgf}
 \E\exp\left(\frac12\sum_{b=1}^s I_b\right)\le2^s.
\end{equation}
\end{lemma}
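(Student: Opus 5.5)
The plan is to reduce \eqref{eq:mixed-mgf} to the single-count bound of Lemma~\ref{lem:rate}, in its Poisson version, by passing through the joint moment generating function of the multinomial counts. First I would record that each $I_b$ has a variational representation as a supremum of linear exponents:
\[
 I_{\mu}(u)=\sup_{\theta\ge0}\bigl(\theta u-\mu(e^\theta-1)\bigr),
\]
since for $u\ge\mu$ the optimizer is $\theta=\log(u/\mu)$, and for $u<\mu$ the supremum is attained at $\theta=0$ and equals $0$. Then $\Pp\{e(Q_b)\ge u\}\le e^{-I_{\mu_b}(u)}$ for $u\ge\mu_b$ would follow from a Chernoff bound once we know $\E e^{\theta e(Q_b)}\le\exp(\mu_b(e^\theta-1))$ for $\theta\ge0$. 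The joint version of this statement is the real input.

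\emph{Step 1 (joint MGF domination).} Let $\pi_b=|Q_b|/n^k$. Since the $Q_b$ are disjoint, each draw $E_\nu$ falls in at most one $Q_b$, so for $\theta_1,\ldots,\theta_s\ge0$,
\[
 \E\exp\Bigl(\sum_b\theta_b e(Q_b)\Bigr)
 =\Bigl(1+\sum_b\pi_b(e^{\theta_b}-1)\Bigr)^m
 \le\exp\Bigl(\sum_b m\pi_b(e^{\theta_b}-1)\Bigr)
 \le\exp\Bigl(\sum_b\mu_b(e^{\theta_b}-1)\Bigr),
\]
using $1+z\le e^z$ and $\mu_b\ge m\pi_b$ together with $e^{\theta_b}-1\ge0$.

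\emph{Step 2 (joint tail bound for the rates).} We cannot factor $\E\exp(\frac12\sum I_b)$ as a product because the counts are dependent. Instead I would bound the joint upper tail. For thresholds $t_1,\ldots,t_s\ge0$, let $u_b$ satisfy $I_{\mu_b}(u_b)=t_b$ as in Lemma~\ref{lem:rate}, and take $\theta_b=\log(u_b/\mu_b)\ge0$. Markov's inequality with Step 1 then gives
\[
 \Pp\{e(Q_b)\ge u_b\ \forall b\}\le e^{-\sum_b t_b}.
\]
This is exactly what independence would give. Write $Y_b=I_b$. We need $\E e^{\frac12\sum Y_b}\le\prod_b 2$. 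Expanding $e^{Y_b/2}=1+\frac12\int_0^\infty e^{t/2}\one\{Y_b>t\}\,dt$ and multiplying out, the product becomes a sum over subsets $S\subseteq[s]$ of integrals of $\Pp\{Y_b>t_b\ \forall b\in S\}$. Each such joint probability is at most $e^{-\sum_{b\in S}t_b}$ by the bound above, applied only to the coordinates in $S$; for the remaining $b$ we take $\theta_b=0$. Every term is therefore bounded as in the independent case, and the sum collapses to $\prod_b\bigl(1+\frac12\int_0^\infty e^{-t/2}dt\bigr)=2^s$.

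\emph{Main obstacle.} The delicate point is Step 2: justifying that a multivariate Chernoff bound on the orthant $\{Y_b>t_b\}$ suffices to control the expectation of the product, since the $Y_b$ are dependent. The multilinear expansion via the layer-cake formula is what makes this work. It uses only joint upper-orthant probabilities, which is precisely what the exponential-moment bound with nonnegative $\theta_b$ controls. I would check this carefully, including the strictness $\{I>t\}=\{e(Q)>u_t\}$ and the case of boundary values.
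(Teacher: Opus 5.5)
Your proposal is correct and follows the paper's proof essentially step for step. Both dominate the joint multinomial MGF by the Poisson one via disjointness, $1+z\le e^z$, and $\mu_b\ge m|Q_b|/n^k$. Both then derive the orthant tail bound with $\theta_b=\log(u_b/\mu_b)$ on $S$ and $\theta_b=0$ elsewhere, and conclude by the layer-cake expansion of $\prod_b e^{I_b/2}$ over subsets together with Tonelli; the variational formula for $I_\mu$ you record is harmless but not needed.
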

\begin{proof}
Put $q_b=|Q_b|/n^k$. Disjointness, $1+x\le e^x$, and
$mq_b\le\mu_b$ give, for $\theta_1,\ldots,\theta_s\ge0$,
\begin{align*}
 \E\exp\left(\sum_{b=1}^s\theta_be(Q_b)\right)
 &=\left(1+\sum_{b=1}^s q_b(e^{\theta_b}-1)\right)^m\le\exp\left(\sum_{b=1}^s\mu_b(e^{\theta_b}-1)\right).
\end{align*}
For $B\subseteq[s]$ and $t_b\ge0$, choose $u_b\ge\mu_b$ with
$I_{\mu_b}(u_b)=t_b$, as in Lemma~\ref{lem:rate}.
Since $\{I_b>t_b\}=\{e(Q_b)>u_b\}$, Markov's inequality with
$\theta_b=\log(u_b/\mu_b)$ on $B$ and zero elsewhere yields
\[
 \Pp\{I_b>t_b\text{ for every }b\in B\}
 \le \exp\left(\sum_{b\in B}
       [\mu_b(e^{\theta_b}-1)-\theta_bu_b]\right)
 =e^{-\sum_{b\in B}t_b}.
\]
Finally, expand $\prod_{b=1}^s e^{I_b/2}$ using
$e^{z/2}-1=\frac12\int_0^\infty e^{t/2}\one\{z>t\}\,dt$ for $z\ge0$.
Tonelli's theorem and the joint tail bound give
\begin{align*}
 \E\exp\left(\frac12\sum_{b=1}^s I_b\right)
 &\le\sum_{B\subseteq[s]}\prod_{b\in B}
 \left(\frac12\int_0^\infty e^{-t/2}\,dt\right)
 =\sum_{B\subseteq[s]}1=2^s.
 \qedhere
\end{align*}
\end{proof}

\begin{corollary}[Multinomial multibox discrepancy]\label{cor:multi-local}
Fix a deterministic integer $L\ge1$ and $R>0$.
For the model \eqref{eq:fw-model}, define, for each nonempty box
$Q_{\mathbf{b}}$ of a fixed labeled partial partition,
\[
 e(Q_{\mathbf{b}})=\sum_{\mathbf i\in Q_{\mathbf{b}}}X_{\mathbf i},
 \qquad \mu_{\mathbf{b}}=\E e(Q_{\mathbf{b}})=\frac dn|Q_{\mathbf{b}}|,
 \qquad I_{\mathbf{b}}=I_{\mu_{\mathbf{b}}}(e(Q_{\mathbf{b}})).
\]
Here $e(Q_{\mathbf{b}})$ counts sampled edges in $Q_{\mathbf{b}}$ with
multiplicity, and $I_{\mathbf{b}}$ measures its upper-tail deviation
from the mean $\mu_{\mathbf{b}}$ using the rate function
\eqref{eq:rate}. In particular, $I_{\mathbf{b}}=0$ when
$e(Q_{\mathbf{b}})\le\mu_{\mathbf{b}}$.
With probability at least $1-n^{-R}$, every partial
partition and every nonempty subcollection of its boxes satisfy
\[
 \sum_{\mathbf{b}\in\mathcal B}I_{\mathbf{b}}
 \le K\bigl(H(\mathcal B)+|\mathcal B|\log(eL)\bigr),
 \qquad K=2(R+k+3).
\]
\end{corollary}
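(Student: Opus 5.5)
The plan is to rerun the proof of Lemma~\ref{lem:local} verbatim, replacing its single probabilistic input. In that proof, independence of disjoint box counts was used only once: to obtain, for a fixed collection $\mathcal B$ and a fixed valid assignment, the bound
\[
 \E\exp\Bigl(\tfrac12\sum_{\mathbf b\in\mathcal B}I_{\mathbf b}\Bigr)\le2^{|\mathcal B|}.
\]
Everything after that was deterministic counting: Markov's inequality, the entropy sum $Z$, and the geometric series over $|\mathcal B|$. So it suffices to establish this exponential-moment bound in the multinomial model.

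First I check the hypotheses of Lemma~\ref{lem:multinomial}. Fix a nonempty $\mathcal B\subseteq\{0,\dots,L-1\}^k$ with $m'=|\mathcal B|$ elements, and a valid assignment of sets $D_j^\ell$ that are nonempty and pairwise disjoint within each mode.
\begin{itemize}
\item The boxes $Q_{\mathbf b}$, $\mathbf b\in\mathcal B$, are nonempty.
\item They are pairwise disjoint, since two distinct label tuples differ in some mode $j$, where the corresponding classes are disjoint.
\item The reference means are $\mu_{\mathbf b}=(d/n)|Q_{\mathbf b}|=m|Q_{\mathbf b}|/n^k$, because $d=m/n^{k-1}$. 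This satisfies the requirement $\mu_b\ge m|Q_b|/n^k$ with equality.
\end{itemize}
Lemma~\ref{lem:multinomial} then gives \eqref{eq:mixed-mgf}, which is exactly the required bound $\E\exp(\frac12\sum_{\mathbf b}I_{\mathbf b})\le 2^{m'}$. Note that $m'$ here must not be confused with the edge count $m$.

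Next, Markov's inequality with $a=K(H(\mathcal B)+m'\log(eL))$ reproduces the tail bound \eqref{eq:fixed-budget-tail}. The uniform step is then copied unchanged. Summing $e^{-KH(\mathcal B)/2}$ over valid assignments is at most $Z^{\sum_j|\mathcal B_j|}\le Z\le n^{-R-1}$ by \eqref{eq:Z}; this uses only $K/2-1=R+k+2$ and is model-free. Summing over $\mathcal B$ with ratio $2e^{-K/2}L^{k-K/2}<1/2$ gives total failure probability at most $Z\le n^{-R}$.

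The only point needing care is that $L$ must be deterministic, which is assumed, so the union is over a fixed finite family. I expect no real obstacle here: the substantive step, dominating the dependent multinomial counts by independent Poisson counts at the level of joint tails, is already handled in Lemma~\ref{lem:multinomial}.
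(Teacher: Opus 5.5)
Your proposal is correct and follows the paper's proof. For a fixed valid assignment you check that the boxes are nonempty and pairwise disjoint, with reference means $(d/n)|Q_{\mathbf b}|=m|Q_{\mathbf b}|/n^k$. Lemma~\ref{lem:multinomial} then gives $\E\exp\bigl(\frac12\sum_{\mathbf b}I_{\mathbf b}\bigr)\le 2^{|\mathcal B|}$ in place of the independence step, and the Markov, entropy-sum, and geometric-series counting from Lemma~\ref{lem:local} carries over unchanged.
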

\begin{proof}
For each fixed assignment of coordinate classes, the boxes are
disjoint and their means are $(d/n)|Q_{\mathbf{b}}|$.
Lemma~\ref{lem:multinomial} replaces the independence step in
\eqref{eq:fixed-budget-tail}, giving exactly the same bound.
The counting of used labels, coordinate sets, and box collections
in \eqref{eq:Z} and the remainder of Lemma~\ref{lem:local} is
deterministic and applies without change. Summing those bounds
gives $n^{-R}$.
\end{proof}

\subsection{Fiber degrees and light tuples for sampled edges}

For each fixed fiber its degree is
$\operatorname{Bin}(m,n^{-(k-1)})$, with mean $d$.
The Chernoff and union-bound calculation in
Lemma~\ref{lem:fibers} therefore proves the same degree estimate.

For the light part, put $W=X-P$, fix unit vectors and any threshold
$\tau>0$, and define
\[
 c_{\mathbf i}=\prod_{j=1}^k x_{j,i_j}\,
                   \one\left\{\left|\prod_{j=1}^k x_{j,i_j}\right|
                                    \le\tau\right\}.
\]
The centered light sum is
\begin{equation}\label{eq:sample-light}
 L_W(x):=\sum_{\mathbf i}c_{\mathbf i}(X_{\mathbf i}-d/n)
 =\sum_{\nu=1}^m\left(c_{E_\nu}-\E c_{E_\nu}\right).
\end{equation}
The summands on the right are independent and bounded in absolute value by
$2\tau$, and their total variance is at most
\begin{equation}\label{eq:sample-variance}
 m\,\E c_{E_1}^2
 =\frac{m}{n^k}\sum_{\mathbf i}c_{\mathbf i}^2
 \le\frac dn.
\end{equation}
With $\tau=\sqrt d/n$, Bernstein's inequality~\cite[Theorem~2.9.5]{V26}
gives
\begin{equation}\label{eq:sample-light-tail}
 \Pp\{|L_W(x)|>B\sqrt d\}
 \le2\exp\left\{-n\frac{B^2}{2+4B/3}\right\}.
\end{equation}
For $R>0$, choose $B=B_{k,R}$ with
$B^2/(2+4B/3)\ge k\log(1+4k)+R+1$.
Since the product net $\mathcal T^k$ from Section~\ref{sec:discretization}
has at most $(1+4k)^{kn}$ points, a union bound in
\eqref{eq:sample-light-tail} gives, for $n\ge2$,
\[
 \Pp\left\{\max_{x\in\mathcal T^k}|L_W(x)|>B\sqrt d\right\}
 \le2\exp\left\{n\left[k\log(1+4k)-\frac{B^2}{2+4B/3}\right]\right\}
 \le n^{-R}.
\]

\subsection{Completion of the proof}

\begin{proof}[Proof of Theorem~\ref{thm:fw}]
By increasing $C_{k,r,c}$ if necessary, we may assume that $n$
is sufficiently large that $d\ge c\log n\ge1$.

Suppose first that $d<n^2$.
Take $R=r+2$ in
Corollary~\ref{cor:multi-local}, with $L$ as in \eqref{eq:L}.
The corollary, the fiber bound, and
\eqref{eq:sample-light-tail} followed by a product-net union bound
hold together with probability at least $1-3n^{-R}$.
Proposition~\ref{prop:heavy} controls the heavy part on this event.
The contribution of the mean $P$ on heavy tuples is at most
$\sqrt d$, by the calculation in \eqref{eq:heavy-mean} with $p=d/n$.
The net approximation \eqref{eq:net-bound} now gives the conclusion.

Finally, if $d\ge n^2$, then $\tau=\sqrt d/n\ge1$.
Every coordinate product of unit vectors is at most one in absolute
value, so every tuple is light. In this case
\eqref{eq:sample-light-tail} and the same net approximation suffice.
Increasing the constant over these cases proves the theorem.
\end{proof}

\appendix
\section{A dyadic entropy estimate}\label{app:dyadic}

The estimate for boxes in $\mathcal C_3$ uses the following bound
for the entropy sum in one mode. It is uniform in the number of
dyadic levels.

\begin{lemma}[Entropy of dyadic masses]\label{lem:mass}
Let $n\ge2$ and $L\ge1$ be integers, and set
\[
 t_\ell=n4^{-\ell},\qquad \ell=0,\ldots,L-1.
\]
If $\alpha_\ell\ge0$ and $\sum_{\ell=0}^{L-1}\alpha_\ell\le4$, then
\begin{equation}\label{eq:mass-entropy}
 \sum_{\substack{0\le\ell<L\\t_\ell>1}}\alpha_\ell
 \left(1+\frac{\log_+(1/\alpha_\ell)}{\log(et_\ell)}\right)
 \le 8+\frac43.
\end{equation}
Terms with $\alpha_\ell=0$ are interpreted as zero.
\end{lemma}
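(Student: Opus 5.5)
We split the sum into the plain mass part and the entropy correction. The first part is immediate: $\sum_\ell\alpha_\ell\le4$. It therefore suffices to show
\[
 \sum_{\substack{0\le\ell<L\\t_\ell>1}}\frac{\alpha_\ell\log_+(1/\alpha_\ell)}{\log(et_\ell)}\le 4+\frac43 .
\]
Only levels with $\alpha_\ell<1$ contribute. We split these according to whether $\alpha_\ell$ is not too small compared with a geometric weight. Put $\beta_\ell=\log(et_\ell)=1+\log n-\ell\log 4$. Since $t_\ell>1$, the values $\beta_\ell$ over the relevant levels are $>1$ and form an arithmetic progression with step $\log4$, decreasing in $\ell$.

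Use the elementary bound $u\log(1/u)\le \sqrt u\cdot\sup_{v\in(0,1]}\sqrt v\log(1/v)=2\sqrt u/e$, or more flexibly compare with a threshold $\epsilon_\ell$. For each contributing level, we choose one of two cases.

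\emph{Case (a): $\alpha_\ell\ge e^{-2\beta_\ell}$.} Then $\log(1/\alpha_\ell)\le2\beta_\ell$, so the term is at most $2\alpha_\ell$. Summed over these levels this gives at most $8$. That is too much for the stated constant, so we refine: with threshold $\alpha_\ell\ge e^{-\beta_\ell}$ the term is at most $\alpha_\ell$, contributing at most $4$.

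\emph{Case (b): $\alpha_\ell<e^{-\beta_\ell}$.} Since $u\log(1/u)$ is increasing on $(0,1/e)$ and $\beta_\ell>1$, the term is at most
\[
 \frac{e^{-\beta_\ell}\beta_\ell}{\beta_\ell}=e^{-\beta_\ell}=\frac{1}{e\,t_\ell}.
\]
Summing over levels with $t_\ell>1$: the $t_\ell$ are $n4^{-\ell}$, and in decreasing order of $\ell$ the smallest exceeds $1$ and each earlier one is $4$ times larger. Hence
\[
 \sum_\ell\frac1{t_\ell}\le\sum_{m\ge0}4^{-m}=\frac43,
\]
and with the factor $1/e$ this gives at most $\tfrac{4}{3e}\le\tfrac43$.

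Adding up gives $4+4+\tfrac43$, which matches $8+\tfrac43$. I expect the main care point to be the threshold choice and the monotonicity of $u\log(1/u)$ on $(0,1/e)$, which requires $e^{-\beta_\ell}\le 1/e$, i.e.\ $t_\ell\ge1$. The strict inequality $t_\ell>1$ is what makes the geometric sum of $1/t_\ell$ start at a value below $1$.
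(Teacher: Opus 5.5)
Your proof is correct and is essentially the paper's argument. Both proofs bound each term by a multiple of $\alpha_\ell$ plus $O(1/t_\ell)$, then sum using $\sum_\ell\alpha_\ell\le 4$ and the geometric bound $\sum_{t_\ell>1}1/t_\ell\le 4/3$. The paper gets the per-term bound $2\alpha_\ell+1/t_\ell$ without cases, from $\log_+(1/\alpha_\ell)\le\log t_\ell+1/(\alpha_\ell t_\ell)$. Your split at $\alpha_\ell=1/(e t_\ell)$, using the monotonicity of $u\log(1/u)$, gives the slightly better total $8+4/(3e)$. In the write-up, delete the unused $\sqrt{u}$ remark and the abandoned $e^{-2\beta_\ell}$ threshold.
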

\begin{proof}
For each index in the sum with $\alpha_\ell>0$, the inequalities
$t_\ell>1$ and $\log_+u\le u$ for $u>0$ give
\[
 \log_+(1/\alpha_\ell)
 \le\log t_\ell+\log_+\!\left(\frac1{\alpha_\ell t_\ell}\right)
 \le\log t_\ell+\frac1{\alpha_\ell t_\ell}.
\]
Since $\log(et_\ell)\ge1$, it follows that
\begin{equation}\label{eq:mass-one-term}
 \alpha_\ell\left(1+
       \frac{\log_+(1/\alpha_\ell)}{\log(et_\ell)}\right)
 \le\alpha_\ell+
       \frac{\alpha_\ell\log t_\ell+1/t_\ell}{\log(et_\ell)}
 \le2\alpha_\ell+\frac1{t_\ell}.
\end{equation}
The same bound holds for $\alpha_\ell=0$ by the stated convention.
Since $t_\ell>1$ exactly when $4^\ell<n$, the indices in the sum
are $0,\ldots,\ell_*$, where
$\ell_*:=\max\{0\le\ell<L:4^\ell<n\}$. This maximum exists because
$n\ge2$ and $L\ge1$. Reindexing by $q=\ell_*-\ell$ gives
\[
 \sum_{\substack{0\le\ell<L\\t_\ell>1}}\frac1{t_\ell}
 =\frac1n\sum_{\ell=0}^{\ell_*}4^\ell
 =\frac{4^{\ell_*}}n\sum_{q=0}^{\ell_*}4^{-q}.
\]
Since $4^{\ell_*}/n<1$, dropping this prefactor and extending the
nonnegative sum to all $q\ge0$ bounds the last expression by
$\sum_{q=0}^{\infty}4^{-q}=1/(1-1/4)=4/3$.
Summing the one-term bound and using $\sum_\ell\alpha_\ell\le4$
proves \eqref{eq:mass-entropy}.
\end{proof}

\subsection*{Acknowledgements}

GPT-6 assisted in developing the proof of the heavy-tuple part, improving a loose estimate identified in the authors' earlier work \cite{ZZ21}. The authors independently verified the
argument and take full responsibility for the final proof. Y.Z. was partially supported by the Simons Grant MPS-TSM-00013944 and NSF DMS-2606337. This work was carried out while Y.Z. was visiting the Simons Institute for the Theory of Computing during the Spectral Theory Beyond Graphs program in Fall 2026. 

\bibliographystyle{plain}
\bibliography{sparse_random_tensor_references}
\end{document}